\newtheorem{theorem}{Theorem}[section]
\newtheorem{lemma}[theorem]{Lemma}
\newtheorem{corollary}[theorem]{Corollary}
\newtheorem{remark}[theorem]{Remark}
\theoremstyle{definition}
\newtheorem{definition}[theorem]{Definition}
\numberwithin{equation}{section}
\begin{document}

\title[Functional analogs]
{Functional analogs of the Shephard, Busemann--Petty, and Milman problems}

\author{Vadim Gorev}


\author{Egor Kosov}


\maketitle

\begin{abstract}
The paper studies possible functional analogs of classical
problems from convex geometry. In particular, we provide
some bounds in the functional Shephard, Busemann--Petty, and Milman
problems generalizing known bounds in this problems for convex sets.
\end{abstract}

\noindent
Keywords: Log-concave function; Busemann--Petty problem; Shephard problem;
Geometric inequalities; Functional inequalities; Bounded variation

\noindent
AMS Subject Classification: primary, 52A40; secondary, 39B62; 52A21

\section{Introduction}
\label{sect-Int}

Classical Shephard's problem
(see \cite{Sh}) asks
whether one has
$$
\lambda_n(K)\le \lambda_n(V)
$$
for a pair of convex bodies $K, V\subset \mathbb{R}^n$
such that
\begin{equation}\label{eq-Sh-cond}
 \lambda_{n-1}(P_H(K))\le \lambda_{n-1}(P_H(V))
\end{equation}
for every $n-1$ dimensional hyperplane $H$.
Here $P_H$ denotes the orthogonal
projection operator
on the hyperplane $H$ and
$\lambda_n$ denotes the standard $n$-dimensional
Lebesgue measure.
Similarly, the classical Busemann--Petty problem (see \cite{BP})
asks whether one has
$$
\lambda_n(K)\le \lambda_n(V)
$$
for a pair of centrally symmetric convex bodies $K, V\subset \mathbb{R}^n$
such that
\begin{equation}\label{eq-BP-cond}
\lambda_{n-1}(K\cap H)\le \lambda_{n-1}(V\cap H)
\end{equation}
for every $n-1$ dimensional hyperplane $H$.
It is now known that both problems have negative answers for sufficiently big $n$
(see the monograph \cite{Kold} for a detailed discussion).

For every dimension $n$,
the isomorphic versions of the stated problems seek to find
the best possible constant $C_n$ such that
$$
\lambda_n(K)\le C_n\lambda_n(V)
$$
for every pair of convex bodies $K, V\subset \mathbb{R}^n$
under the assumption \eqref{eq-Sh-cond} for the
isomorphic Shephard problem and under the ussumption \eqref{eq-BP-cond}
for the
isomorphic Busemann--Petty problem.
It is known that the isomorphic Busemann--Petty problem is closely
related to Bourgain's slicing conjecture (see Section 3.5.2 in \cite{BGVV}).
For the constant $C_n$ in the isomorphic Shephard problem,
K. Ball (see \cite[Section 2]{B91}) showed that $c_1\sqrt{n}\le C_n\le c_2\sqrt{n}$
for some absolute constants $c_2>c_1>0$.

The variant of these two problems, proposed by V. Milman, has been studied in \cite{GK}.
The Milman variant asks whether one has
$$
\lambda_n(K)\le \lambda_n(V)
$$
for a pair of convex bodies $K, V\subset \mathbb{R}^n$
such that
$$
 \lambda_{n-1}(P_H(K))\le \lambda_{n-1}(V\cap H)
$$
for every $n-1$ dimensional hyperplane $H$.
The paper \cite{GK} gives an affirmative answer to this question.

In this paper we study functional analogs of the geometric problems described above.
Functional analogs of geometric notions and inequalities have been extensively studied in recent years
(e.g. see
\cite{A-GMJV16}, \cite{A-GMJR}, \cite{A-G}, \cite{A-GA-AMJV}, \cite{A-GBM}, \cite{A-AKM},
\cite{BCF}, \cite{C06}, \cite{CF}, \cite{KM}
and citations therein).
Obviously, even the classical Shephard's and
Busemann--Petty problems are meaningless without any convexity assumption.
Thus, we restrict ourselves mainly to the class of logarithmically concave functions
which is one of the natural extensions of convex bodies to functional setting.
Our initial interest has been concerned with the functional Shephard problem
since the natural counterpart of a projection of a body to a subspace
$H=\langle\theta\rangle^\bot$, $|\theta|=1$,
is $L^1$-norm of a derivative of a function $\|\partial_\theta f\|_{L^1(\lambda_n)}$.
Thus, the functional Shephard
problem asks what we can say about the functions $f_1, f_2$ such that
$$
\|\partial_\theta f_1\|_{L^1(\lambda_n)}\le \|\partial_\theta f_2\|_{L^1(\lambda_n)} \quad \forall \theta \in \mathbb{R}^n, |\theta| = 1.
$$
It appears that the connection between $f_1$ and $f_2$
is not through the inequality between $L^1$-norms
of these functions but it rather involves a combination
of different $L^p$-norms of $f_1$ and~$f_2$.

The main result concerning the functional Shephard problem
is presented in Section~\ref{sect-Sh-1} (see Theorem~\ref{T1}).
In Section~\ref{sect-Sh-2} we study the functional Shephard-type problem
when the assumptions are imposed on projections on subspaces of a  codimension
bigger than one.
The main result here is stated in Theorem \ref{T2}.
The main tools in Sections \ref{sect-Sh-1} and \ref{sect-Sh-2} are the functional analog
of the John ellipsoid that was constructed in \cite{A-GMJR}
and the functional version of Aleksandrov’s inequality from \cite{BCF}.
The proofs here are similar to K. Ball argument from~\cite{B91}.
In Section \ref{sect-BP} we consider the functional analog of the
Busemann--Petty problem (see Theorems \ref{T4} and \ref{T5})
and
Section~\ref{sect-M}
is devoted to the functional Milman problem (see Theorems \ref{T3} and \ref{T6}).
The arguments in the last two sections follow some ideas from \cite{GK}.
One of the main tools in these two sections is
functional Grinberg's inequality which is proved in Theorem \ref{funcgrinberg}.

The following section presents the notation used in the present paper,
and contains several known results that will be applied throughout the paper.

\section{Definitions, notation and preliminary results}
\label{sect-N}

Let $\langle\cdot, \cdot\rangle$ be the standard inner product on
$\mathbb{R}^n$ and let $|\cdot|$ be the standard Euclidean norm in $\mathbb{R}^n$,
i.e. $|x|:=\sqrt{\langle x, x\rangle}$. Let $\lambda_n$ be the standard Lebesgue
measure on $\mathbb{R}^n$. The volume of the unit Euclidean ball
$B_2^n:=\{x\colon |x|\le 1\}$ in $\mathbb{R}^n$
is denoted by $\omega_n$,
i.e. $\omega_n:=\lambda_n(x\colon |x|\le 1)$.
The space $L^p(\mathbb{R}^n)$ consists of all
functions integrable to the power $p\in[1, +\infty)$.
The space $L^p(\mathbb{R}^n)$ is
equipped with the standard norm
$$
\|f\|_{L^p(\mathbb{R}^n)}=\|f\|_p:=\Bigl(\int_{\mathbb{R}^n}|f(x)|^p\, dx\Bigr)^{1/p}.
$$
Let $\|f\|_\infty:=\sup\limits_{x\in\mathbb{R}^n}|f(x)|$
for a bounded function $f$ on $\mathbb{R}^n$.

Recall that the function $f\colon \mathbb{R}^n\to [0, +\infty)$
is called logarithmically concave (log-concave) if
$$
f(tx+(1-t)y)\ge [f(x)]^t\cdot[f(y)]^{1-t}\quad \forall x, y\in \mathbb{R}^n, \forall t\in(0, 1).
$$
In particular, the sets $A_t(f):=\{x\in \mathbb{R}^n\colon f(x)\ge t\}$
are convex for every logarithmically concave function $f$.
The main object, studied in this paper,
is the class of all logarithmically concave functions on $\mathbb{R}^n$.
Namely, we consider the class
$$
Q_0^n:=\{f\colon \mathbb{R}^n\to [0, +\infty)\colon f \hbox{--- log-concave, upper semicontinuous},
f\in L^1(\mathbb{R}^n), \|f\|_1 > 0\}.
$$
We recall that a function $f\colon\mathbb{R}^n\to \mathbb{R}$
is called upper semicontinuous if the sets
$$
A_t(f):=\{x\in \mathbb{R}^n\colon f(x)\ge t\}
$$ are closed for each $t\in\mathbb{R}$.
Equivalently, one has $\limsup\limits_{n\to\infty}f(x_n)\le f(x)$
for every sequence $x_n\to x$.
We point out, that for a function $f \in Q_0^n$ the stated
inequality is actually an equality.
We also recall (see \cite[Lemma~2.2.1]{BGVV}) that, for every function
$f\in Q_0^n$, there are numbers $A, B>0$ such that
\begin{equation}\label{eq-exp}
f(x)\le Ae^{-B|x|}\quad \forall x\in \mathbb{R}^n.
\end{equation}
In particular, any function $f\in Q_0^n$ is bounded and $\lim\limits_{|x|\to \infty}f(x) = 0$.
Due to the upper semicontinuity assumption and due to the above mentioned bound,
every function from $Q_0^n$ attains its maximum.

Let $G_{n, k}$ denote the Grassmannian of all $k$-dimensional subspaces in $\mathbb{R}^n$
and let $\nu_{n, k}$ be the standard probability Haar measure on $G_{n, k}$.
The projection $P_H f\colon H\to[0,+\infty)$
of a function $f\in Q_0^n$ on a subspace $H\in G_{n, k}$
is defined by the equality
$$
P_H f(y) := \sup\limits_{z\in H^\bot}f(y+z),
$$
where $H^\bot$ is the orthogonal complement of $H$ (see \cite{KM}, \cite{BCF}).
We point out that in the case $f = I_K$, where $I_K$ is the indicator function of
a convex body $K\subset \mathbb{R}^n$,
one has $P_H[I_K] = I_{P_H(K)}$, where $P_H(K)$ is the  orthogonal projection
of a set $K$ on a subspace $H$.
It is also worth mentioning that the bound \eqref{eq-exp}
and the upper semicontinuity assumption
imply that the supremum in the definition of $P_H f$ is attained, i.e.
$P_H f(y) = \max\limits_{z \in H^{\perp}} f(y + z)$.
We note that, for every function $f\in Q_0^n$, one has
$$
A_t(P_H f) = P_H\bigl(A_t(f)\bigr)\quad \forall t\in(0, +\infty).
$$
Along with projection of a function $f \in Q_0^n$,
one can also define its section
$S_H f: H \rightarrow [0, +\infty)$ by a subspace $H\in G_{n, k}$
by the natural equality $S_H f := f|_H$, i.e.
$$
S_H f(y) := f(y)\quad \forall y\in H.
$$
We note that in the case $f = I_K$ where
$K\subset \mathbb{R}^n$ is a convex body, one has $S_H[I_K] = I_{S_H(K)}$
where $S_H(K)$ is the section of a set $K$ by a subspace $H$.
Finally, for a nondegenerate linear mapping $A\colon \mathbb{R}^n\to \mathbb{R}^n$,
we define the linear image $Af\colon \mathbb{R}^n\to\mathbb{R}$
of a function $f\colon \mathbb{R}^n\to\mathbb{R}$ by the equality $Af(x):=f(A^{-1}x)$.
For an indicator function $f=I_K$, this definition
returns the indicator function of the linear image of the set, i.e. $AI_K = I_{AK}$.

As it was proved in \cite{BCF}, for logarithmically concave functions,
one has the following analog of the classical Steiner formula:
\begin{equation}\label{eq-stain}
\int_{\mathbb{R}^n}f_\delta(x)\, dx = \sum_{j=0}^nC_n^j W_j(f)\delta^j,
\end{equation}
where
$f_\delta(x):=\sup\limits_{|x-y|\le \delta}f(y)$, $C_n^j:=\frac{n!}{j!(n-j)!}$,
and where
\begin{multline}\label{eq-querm}
W_{n-k}(f):=\frac{\omega_n}{\omega_k}
\int_{0}^{+\infty} \int_{Gr_{n,k}} \lambda_k\bigl(P_H(A_t(f)\bigr) \, \nu_{n, k}(dH) \, dt
\\=\frac{\omega_n}{\omega_k}
\int_{Gr_{n,k}} \int_{0}^{+\infty} \lambda_k\bigl(A_t(P_H f)\bigr) \, dt\, \nu_{n, k}(dH)
= \frac{\omega_n}{\omega_k}
\int_{Gr_{n,k}} \|P_H f\|_{L^1(H)}\, \nu_{n, k}(dH).
\end{multline}

We recall that a function $f\in L^1(\mathbb{R}^n)$
belongs to the class $BV(\mathbb{R}^n)$ of functions with bounded variation
(see \cite[Definition 3.1]{AFP}) if,
for every $\theta \in \mathbb{R}^n$,
there is a bounded Borel measure
$D_\theta f$ such that
$$
\int_{\mathbb{R}^n}f(x)\partial_\theta\varphi(x)\, dx=
-\int_{\mathbb{R}^n}\varphi(x)\, D_\theta f(dx)\quad
\forall \varphi\in C_0^\infty(\mathbb{R}^n).
$$
For a function $f\in BV(\mathbb{R}^n)$, its variation
$V(f)=V(f, \mathbb{R}^n)$ is the following quantity
$$
V(f):=\sup\Bigl\{
\int_{\mathbb{R}^n}f(x){\rm div}\Phi(x)\, dx\colon
\Phi\in C_0^\infty(\mathbb{R}^n; \mathbb{R}^n),
\||\Phi|\|_\infty\le1\Bigr\}.
$$
We note that for a Sobolev function $f\in W^{1, 1}(\mathbb{R}^n)$,
one has
$$
V(f) = \int_{\mathbb{R}^n}|\nabla f(x)|\, dx.
$$
In \cite{Kr} (see the proof of Theorem $1$)
the inclusion $Q_0^n\subset BV(\mathbb{R}^n)$ was established.
Moreover, one has
$$
\|D_\theta f\|_{\rm TV} =
2|\theta|\cdot\|P_{\langle\theta\rangle^\bot}f\|_{L^1(\langle\theta\rangle^\bot)}\quad
\forall f\in Q_0^n
$$
where $\|\cdot\|_{\rm TV}$ is the total variation norm of a Borel measure.
We note that, for a function $f\in BV(\mathbb{R}^n)$, one has
$$
V(f) = \frac{1}{2\omega_{n-1}}\int_{S^{n-1}} \|D_\theta f\|_{\rm TV}\, \sigma_{n-1}(d\theta)
$$
where $S^{n-1}$ is the unit sphere in $\mathbb{R}^n$, and where $\sigma_{n-1}$
is the standard surface measure on $S^{n-1}$.
For a function $f\in Q_0^n$, the above equality can be also interpreted
as follows
$$
V(f)= \frac{1}{\omega_{n-1}}\int_{S^{n-1}}
\|P_{\langle\theta\rangle^\bot}f\|_{L^1(\langle\theta\rangle^\bot)}\, \sigma_{n-1}(d\theta)
=
\frac{n\omega_n}{\omega_{n-1}}
\int_{Gr_{n,n-1}} \|P_H f\|_{L^1(H)}\, \nu_{n, n-1}(dH)
= n W_1(f).
$$
We recall (see \cite[Theorem 3.47]{AFP}) that one has the embedding
$BV(\mathbb{R}^n)\subset L^{\frac{n}{n-1}}(\mathbb{R}^n)$. Moreover, for every function
$f\in BV(\mathbb{R}^n)$, the Sobolev inequality states that
\begin{equation}\label{eq-sob}
n\omega_n^{\frac{1}{n}}\|f\|_{\frac{n}{n-1}}\le V(f).
\end{equation}

In \cite{A-GMJR} (see also \cite{IN}), the classical notion of the John ellipsoid of a convex body
has been generalized to the case of logarithmically concave functions.
In particular, in the above mentioned paper, the following theorem has been proved.
\begin{theorem}\label{john}\
Let $f\in Q_0^n$. There is a unique function $\mathcal{E}(f)$
of the form $a\|f\|_\infty I_{\mathcal{E}}$ where
$\mathcal{E}$ is an ellipsoid  (i.e. an affine image of the unit ball)
and where $a\in[e^{-n}, 1]$,
such that
$$
1)\ \mathcal{E}(f)\le f;\quad
2)\ \int_{\mathbb{R}^n} \mathcal{E}(f)\, dx
=\max\Bigl\{\int_{\mathbb{R}^n} c I_{\mathcal{E}}\, dx\colon
c I_{\mathcal{E}}\le f, \mathcal{E} - \hbox{ ellipsoid}, c\in[0,+\infty)\Bigr\}.
$$
\end{theorem}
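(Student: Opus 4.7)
The plan is to recast the maximization over pairs $(c,\mathcal{E})$ as a one-dimensional optimization in the scalar $c\in(0,\|f\|_\infty]$, establish existence from compactness together with the tail bound \eqref{eq-exp}, and then exploit log-concavity of $f$ to pin down the optimal value $a$.

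First I would observe that the constraint $cI_\mathcal{E}\leq f$ is equivalent to $\mathcal{E}\subset A_c(f)$. By \eqref{eq-exp} each level set $A_c(f)$ with $c\in(0,\|f\|_\infty]$ is a compact convex set, so the classical John theorem for convex bodies supplies a \emph{unique} maximum-volume ellipsoid $\mathcal{E}_c\subset A_c(f)$; I set $V(c):=\lambda_n(\mathcal{E}_c)$. The problem then reduces to maximizing
\[
\phi(c):=cV(c),\qquad c\in(0,\|f\|_\infty].
\]
The bound \eqref{eq-exp} gives $V(c)\le\lambda_n(A_c(f))\le C(\log(1/c))^n$ for small $c$, so $\phi(c)\to0$ as $c\to0^+$; combined with upper semicontinuity of $V$ (from Hausdorff continuity of $c\mapsto A_c(f)$ and continuity of the John ellipsoid in its body argument), this produces a maximizer $c^*$.

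For the bounds on $a:=c^*/\|f\|_\infty$, the inequality $a\le1$ is immediate from $c^*\leq\|f\|_\infty$. For $a\ge e^{-n}$ I would normalize so that $\|f\|_\infty=f(0)=1$ and pass to the variable $s:=-\log c$, writing $\widetilde V(s):=V(e^{-s})$. The key step is a scaling inequality for $\widetilde V$: if $\mathcal{E}\subset A_{e^{-s}}(f)$ and $t\in(0,1]$, then for every $x\in\mathcal{E}$ log-concavity applied to the pair $x,0$ yields
\[
f(tx)\geq f(x)^t f(0)^{1-t}\geq e^{-ts},
\]
so the ellipsoid $t\mathcal{E}$ lies in $A_{e^{-ts}}(f)$; taking volumes and supremising over $\mathcal{E}$ gives
\[
\widetilde V(ts)\geq t^n\widetilde V(s),\qquad t\in(0,1],\ s>0.
\]
Setting $s^*:=-\log c^*$, optimality of $c^*$ delivers $\widetilde V(s)\leq e^{s-s^*}\widetilde V(s^*)$ for all $s>0$, while the scaling inequality with $t=s/s^*\leq1$ gives $\widetilde V(s)\geq(s/s^*)^n\widetilde V(s^*)$ for $s\in(0,s^*]$. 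Combining these, $(s/s^*)^n\leq e^{s-s^*}$ on $(0,s^*]$; taking logarithms, dividing by $s/s^*-1<0$, and letting $s\nearrow s^*$ yields $s^*\leq n$, that is $a\geq e^{-n}$.

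The part I expect to be the main obstacle is the uniqueness of $c^*$: existence and the bounds on $a$ only use log-concavity, but to rule out two distinct maximizing values $c_1<c_2$ one must show that equality in the scaling inequality forces $f$ to be log-affine on a convex set of positive measure -- a strictness statement which is clean when $f$ is strictly log-concave, but in the full class $Q_0^n$ (which contains indicators of convex bodies and other degenerate examples) is delicate, and this is where the careful construction of \cite{A-GMJR} is needed.
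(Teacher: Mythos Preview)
The paper does not supply its own proof of this theorem: it is stated in Section~\ref{sect-N} as a preliminary result quoted from \cite{A-GMJR} (with \cite{IN} as an additional reference), so there is no in-paper argument to compare your attempt against.

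That said, your outline for existence and for the bound $a\ge e^{-n}$ is essentially correct. Reducing to the one-parameter problem $\phi(c)=c\,\lambda_n(\mathcal{E}_c)$ is the natural move, and the scaling inequality $\widetilde V(ts)\ge t^n\widetilde V(s)$, obtained by applying log-concavity to the pair $(x,0)$ after translating the maximum of $f$ to the origin, cleanly yields $s^*\le n$. One technical caveat: the map $c\mapsto A_c(f)$ need not be Hausdorff-continuous (take $f=I_K$), so your semicontinuity claim for $V$ is not quite right as stated; the standard fix is a direct Blaschke selection argument on a maximizing sequence $(c_k,\mathcal{E}_k)$, using that $\phi(c)\to0$ as $c\to0^+$ to keep the $\mathcal{E}_k$ uniformly bounded and then upper semicontinuity of $f$ to pass the constraint $c_kI_{\mathcal{E}_k}\le f$ to the limit. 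Your diagnosis that uniqueness of the optimal height is the genuine obstacle, and your deferral of that point to \cite{A-GMJR}, is accurate and matches exactly what the present paper does.
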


The function $\mathcal{E}(f)$ is called the John ellipsoid of the function $f\in Q_0^n$.
In \cite{A-GMJR} (see Corollary $1.3$ there) the following bound was established
\begin{equation}
\label{eq10}
I.rat(f) :=
\Bigl( \frac{\|f\|_1}{\|\mathcal{E}(f)\|_1} \Bigr)^{\frac{1}{n}} \le \frac{e}{n} (n!)^{\frac{1}{n}} \cdot I.rat(I_{\Delta^n}),
\end{equation}
where $\Delta^n$ is a regular $n$-dimensional simplex.

We recall the formulas for a volume and a radius of an insphere of a regular simplex with a side length $a$
$$V_n = \frac{a^n}{n!} \sqrt{\frac{n+1}{2^n}}, \quad r_n = \frac{a}{\sqrt{2n (n+1)}}.$$
Thus,
$$ V_n = \sqrt{\frac{n+1}{2^n}} \frac{(2n (n+1))^{\frac{n}{2}}}{n!} r_n^n = \frac{\sqrt{(n+1)} (n (n+1))^{\frac{n}{2}}}{n!} r_n^n.$$
We note that for the indicator functions the John ellipsoid is the indicator of the classical John ellipsoid, i.e. $\mathcal{E}(I_K) = I_{\mathcal{E}(K)}$. Applying the inequality \eqref{eq10} we get
\begin{equation}\label{eq-i-rat}
I.rat(f) =
\Bigl( \frac{\|f\|_1}{\|\mathcal{E}(f)\|_1} \Bigr)^{\frac{1}{n}}
\le \frac{e}{\sqrt{n}}\Bigl(\frac{(n+1)^{\frac{n+1}{2}}}{\omega_n}\Bigr)^{\frac{1}{n}}
\le \frac{e}{\sqrt{n}}\Bigl(\frac{(n+1)^{n + \frac{1}{2}} }{(2\pi)^{\frac{n}{2}}}\Bigr)^{\frac{1}{n}}
\le 4 \sqrt{n}
\end{equation}
where the last inequality holds since the function
$\frac{(x+1)^{1 + \frac{1}{2x}}}{x}$ is monotonically decreasing for $x \ge 1$.

\vskip .1in

We will also need the following functional analog of classical Alexandrov's inequality
(see \cite{BZ}) from \cite{BCF} (see Theorem $6.2$ there).

\begin{theorem}[The functional analog of Alexandrov's inequality]
\label{alexandrov}
Let $f\in Q_0^n$ and let $k, j$ be integers such that $0\le j<k\le n-1$.
Then
$$
\omega_n^{p-1}W_j(f^p)\le\bigl[W_k(f)\bigr]^p,
$$
where $p:=\frac{n-j}{n-k}$.
In particular,
$$
\omega_n^{\frac{n-k}{n}}\|f\|_{\frac{n}{k}}\le W_{n-k}(f),
$$
and, equivalently,
$$
\|f\|_{\frac{n}{k}} \le \frac{\omega_n^{\frac{k}{n}}}{\omega_k}
\int_{Gr_{n,k}} \|P_Hf\|_{L^1(L)}\, \nu_{n, k}(dH).
$$
\end{theorem}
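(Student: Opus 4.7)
The plan is to reduce to the classical Alexandrov inequality for convex bodies applied level-set by level-set to $f$, and then recover the functional statement from a short one-dimensional integral inequality. The starting point is the layer-cake representation already recorded in \eqref{eq-querm}: combined with Kubota's formula it gives
$$
W_j(f)=\int_0^{+\infty}W_j\bigl(A_t(f)\bigr)\,dt,
$$
where on the right $W_j(\cdot)$ denotes the usual $j$th quermassintegral of a convex body. The classical Alexandrov inequality, in the form relevant here, states that $\omega_n^{p-1}W_j(K)\le W_k(K)^p$ for every convex body $K\subset\mathbb{R}^n$, with $p=(n-j)/(n-k)$.

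I would first rewrite the left-hand side using $A_t(f^p)=A_{t^{1/p}}(f)$: the substitution $t=s^p$ yields
$$
W_j(f^p)=p\int_0^{+\infty}s^{p-1}W_j\bigl(A_s(f)\bigr)\,ds.
$$
Applying the classical Alexandrov inequality to each level set $A_s(f)$ and setting $\psi(s):=W_k(A_s(f))$, this gives
$$
\omega_n^{p-1}W_j(f^p)\le p\int_0^{+\infty}s^{p-1}\psi(s)^p\,ds.
$$
Since the sets $A_s(f)$ are nested and $W_k$ is monotone under inclusion, $\psi$ is nonincreasing in $s$, and the layer-cake representation also identifies $W_k(f)=\int_0^{+\infty}\psi(s)\,ds$.

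It remains to prove the one-variable inequality
$$
p\int_0^{+\infty}s^{p-1}\psi(s)^p\,ds\le\Bigl(\int_0^{+\infty}\psi(s)\,ds\Bigr)^p
$$
for every nonincreasing, nonnegative, integrable $\psi$. Setting $\Psi(s):=\int_0^s\psi(u)\,du$, the monotonicity of $\psi$ gives $s\psi(s)\le\Psi(s)$, and since $p\ge 1$ we deduce $s^{p-1}\psi(s)^{p-1}\le\Psi(s)^{p-1}$; multiplying by $p\psi(s)$ and integrating, the right-hand side becomes $\int_0^{+\infty}(\Psi(s)^p)'\,ds=\Psi(+\infty)^p$, which is exactly the claimed bound. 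The two corollaries then come out automatically on specializing to $j=0$, in which case $W_0(f)=\|f\|_1$ and $W_0(f^p)=\|f\|_p^p$; the final displayed inequality is obtained by inserting the Kubota expression for $W_{n-k}(f)$ from \eqref{eq-querm}.

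I do not expect a serious obstacle: the classical Alexandrov inequality is available, and the auxiliary one-dimensional estimate is a short application of the fundamental theorem of calculus. The only mild technicality is verifying the integrability of $\psi$ and of $s^{p-1}\psi(s)^p$, both of which follow from the exponential decay bound \eqref{eq-exp} that forces $A_s(f)$ to be bounded for every $s>0$ and to be empty for $s>\|f\|_\infty$.
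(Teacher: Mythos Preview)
The paper does not prove this theorem; it is quoted as a preliminary result from \cite{BCF} (Theorem~6.2 there), so there is no proof in the paper to compare against. Your argument is correct and is, in outline, the standard route (and the one taken in \cite{BCF}): write $W_j(f)=\int_0^\infty W_j(A_t(f))\,dt$ via Kubota's formula, apply the classical Alexandrov inequality level-set by level-set, and close with the one-variable estimate $p\int_0^\infty s^{p-1}\psi(s)^p\,ds\le\bigl(\int_0^\infty\psi\bigr)^p$ for nonincreasing $\psi$, which you obtain cleanly from $s\psi(s)\le\Psi(s)$ and the fundamental theorem of calculus. The specialization to $j=0$ with the reindexing $k\mapsto n-k$ that yields the two displayed corollaries is also handled correctly.
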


\section{Functional Shephard problem of codimension one}
\label{sect-Sh-1}

The main result of this section is the following theorem.

\begin{theorem}\label{T1}
Let
$f_1\in BV(\mathbb{R}^n)$ and let $f_2\in Q_0^n$.
Assume that
\begin{equation}\label{cond-1}
\|D_\theta f_1\|_{\rm TV}\le \|D_\theta f_2\|_{\rm TV}\quad
\forall\theta\in \mathbb{R}^n,\ |\theta|=1.
\end{equation}
Then
$$
\|f_1\|_{\frac{n}{n-1}} \le 8 \sqrt{n} \|f_2\|_{\infty}^{\frac{1}{n}} \|f_2\|_1^{\frac{n-1}{n}}.
$$
\end{theorem}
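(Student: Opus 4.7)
My plan is to mimic K.~Ball's approach to the isomorphic Shephard problem from~\cite{B91}, replacing convex bodies by log-concave functions and the classical John ellipsoid by the functional John ellipsoid of Theorem~\ref{john}; the bridge between $f_1$ and $f_2$ will be the quantity $W_1(f_2)$.

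The first step is to integrate the hypothesis~\eqref{cond-1} over the unit sphere. The identity
\[
V(f_1)=\frac{1}{2\omega_{n-1}}\int_{S^{n-1}}\|D_\theta f_1\|_{\rm TV}\,\sigma_{n-1}(d\theta)
\]
from Section~\ref{sect-N}, combined with its log-concave counterpart $V(f_2)=nW_1(f_2)$ and~\eqref{cond-1}, gives $V(f_1)\le nW_1(f_2)$. The Sobolev inequality~\eqref{eq-sob} applied to $f_1\in BV(\mathbb{R}^n)$ then yields
\[
\|f_1\|_{n/(n-1)}\le\frac{V(f_1)}{n\omega_n^{1/n}}\le\frac{W_1(f_2)}{\omega_n^{1/n}},
\]
so the theorem reduces to proving the reverse-isoperimetric-type estimate
\[
W_1(f_2)\le 8\sqrt{n}\,\omega_n^{1/n}\|f_2\|_\infty^{1/n}\|f_2\|_1^{(n-1)/n}, \qquad (\ast)
\]
which is the functional analogue of K.~Ball's $\sqrt{n}$ reverse-isoperimetric bound.

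To prove $(\ast)$ I would first use affine invariance to reduce to a normalized John ellipsoid. A direct change of variables shows that if both $f_1$ and $f_2$ are replaced by $f_i\circ A^{-1}$ for the same invertible linear map $A$, then the hypothesis~\eqref{cond-1} remains equivalent to itself (the only change being the reparametrization $\theta\mapsto A^{-1}\theta/|A^{-1}\theta|$ of the unit sphere), while both sides of the conclusion scale by the common factor $|\det A|^{(n-1)/n}$. By Theorem~\ref{john}, $\mathcal E(f_2)=a\|f_2\|_\infty I_E$ for some $a\in[e^{-n},1]$ and some ellipsoid $E$, so after choosing $A$ with $A(E)=B_2^n$ I may assume $\mathcal E(f_2)=a\|f_2\|_\infty I_{B_2^n}$.

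The main step, and the main obstacle, is to establish $(\ast)$ under this normalization. The idea is to mimic Ball's reverse-isoperimetric argument from~\cite{B91}: combine the bound $I.rat(f_2)\le 4\sqrt{n}$ from~\eqref{eq-i-rat} with the functional Aleksandrov inequality of Theorem~\ref{alexandrov} and a John-decomposition/Brascamp--Lieb argument at the contact points of the functional John ellipsoid, with the constant $8\sqrt{n}$ in $(\ast)$ assembled from the $4\sqrt{n}$ of~\eqref{eq-i-rat} and the factor $2$ coming from $\|D_\theta f_2\|_{\rm TV}=2\|P_{\theta^\bot}f_2\|_{L^1}$. The difficulty lies precisely here: every general tool recalled in the excerpt (Sobolev, Aleksandrov, and Aleksandrov--Fenchel-type inequalities) controls $W_1(f_2)$ only from below, so the normalization of the functional John ellipsoid and its Brascamp--Lieb-style consequences are the decisive new inputs needed to produce the matching upper bound~$(\ast)$.
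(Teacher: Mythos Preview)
Your setup is exactly right and matches the paper: integrate~\eqref{cond-1} over $S^{n-1}$ to get $V(f_1)\le nW_1(f_2)$, apply Sobolev, reduce to the reverse-isoperimetric estimate $(\ast)$, and use affine invariance to put $f_2$ in John's position $\mathcal E(f_2)=a\|f_2\|_\infty I_{B_2^n}$ with $a\in[e^{-n},1]$.

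The gap is at the step you yourself flag as the main obstacle. No John-decomposition or Brascamp--Lieb argument is needed, and the functional Aleksandrov inequality indeed goes the wrong way. The paper's route is much more direct: use the Steiner formula~\eqref{eq-stain} to write
\[
nW_1(f_2)=\lim_{\delta\to 0^+}\delta^{-1}\bigl(\|(f_2)_\delta\|_1-\|f_2\|_1\bigr),
\qquad (f_2)_\delta(x):=\sup_{|x-y|\le\delta}f_2(y),
\]
and then bound $(f_2)_\delta$ \emph{pointwise} using only log-concavity and the John-position assumption. Since $I_{B_2^n}\le f_2/(a\|f_2\|_\infty)$, one has for $|z|\le 1$
\[
f_2(x-\delta z)\le f_2(x-\delta z)\Bigl(\tfrac{f_2(z)}{a\|f_2\|_\infty}\Bigr)^{\delta}
\le a^{-\delta}\|f_2\|_\infty^{-\delta}\,f_2\!\Bigl(\tfrac{x}{1+\delta}\Bigr)^{1+\delta}
\le e^{n\delta}f_2\!\Bigl(\tfrac{x}{1+\delta}\Bigr),
\]
the middle inequality being log-concavity applied to the convex combination $\tfrac{1}{1+\delta}(x-\delta z)+\tfrac{\delta}{1+\delta}z=\tfrac{x}{1+\delta}$, and the last using $a\ge e^{-n}$ and $f_2\le\|f_2\|_\infty$. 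Integrating and differentiating at $\delta=0$ gives
\[
nW_1(f_2)\le \Bigl[\tfrac{d}{d\delta}\,e^{n\delta}(1+\delta)^n\Bigr]_{\delta=0}\|f_2\|_1=2n\|f_2\|_1 .
\]
Now split $\|f_2\|_1=\|f_2\|_1^{(n-1)/n}\|f_2\|_1^{1/n}$ and use~\eqref{eq-i-rat} in John's position, $\|f_2\|_1^{1/n}\le 4\sqrt{n}\,\omega_n^{1/n}\|f_2\|_\infty^{1/n}$, to obtain $(\ast)$ with constant $8\sqrt{n}$. (So the $2$ in $8\sqrt{n}=2\cdot 4\sqrt{n}$ comes from the derivative $2n$ above, not from the factor $2$ in $\|D_\theta f\|_{\rm TV}=2\|P_{\theta^\bot}f\|_{L^1}$.)
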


\begin{proof}
Note that, for a linear image $Af_1$
under any nondegenerate linear mapping $A\colon \mathbb{R}^n\to \mathbb{R}^n$,
one has the equality
$$
\|D_\theta Af_1\|_{\rm TV} = |\det A|\|D_{A^{-1}\theta} f_1\|_{\rm TV}.
$$
A similar equality also holds for the function $f_2$.
Thus, the condition
\eqref{cond-1} is also valid for any linear images $Af_1$ and $Af_2$.
Moreover,
$$
\|Af_1\|_{\frac{n}{n-1}} = |\det A|^{\frac{n-1}{n}}\|f_1\|_{\frac{n}{n-1}};\quad
\|Af_2\|_{\infty}^{\frac{1}{n}} \|Af_2\|_1^{\frac{n-1}{n}}
=
|\det A|^{\frac{n-1}{n}}
\|f_2\|_{\infty}^{\frac{1}{n}} \|f_2\|_1^{\frac{n-1}{n}}.
$$
Therefore, it is sufficient to prove the theorem only for some linear images
$Af_1$, $Af_2$ of functions $f_1$ and $f_2$.

Let $\mathcal{E}(f_2)=a\|f_2\|_\infty I_{\mathcal{E}}$ be
the John ellipsoid of the function $f_2$ (see Theorem \ref{john}).
Without loss of generality we can assume that the center of the ellipsoid
$\mathcal{E}$ is at the origin (see Lemma 2.3 in \cite{A-GMJR}).
Let $A$ be a nondegenerate linear transformation that
maps the ellipsoid $\mathcal{E}$ onto the unit ball.
Then the function $a\|f_2\|_\infty I_{A\mathcal{E}} = \mathcal{E}(Af_2)$
is the John ellipsoid of the function $Af_2$.
In particular,
the John ellipsoid of the function $Af_2$ is of the form
$a\|Af_2\|_\infty I_{B^n_2}$
where $B^n_2$ is the unit ball in $\mathbb{R}^n$ centered at the origin.

Now, it is sufficient to prove our theorem only in the case when
the John ellipsoid $\mathcal{E}(f_2)$ of the function $f_2$ is of the form $a\|f_2\|_\infty I_{B^n_2}$.
By the Sobolev inequality \eqref{eq-sob}
\begin{multline*}
n \omega_n^{\frac{1}{n}} \|f_1\|_{\frac{n}{n-1}} \le
V(f_2) =
\frac{1}{2\omega_{n-1}}\int_{S^{n-1}} \|D_\theta f_1\|_{\rm TV}\, \sigma_{n-1}(d\theta)
\\ \le
\frac{1}{2\omega_{n-1}}\int_{S^{n-1}} \|D_\theta f_2\|_{\rm TV}\, \sigma_{n-1}(d\theta)
= nW_1(f_2).
\end{multline*}
Applying the functional analog of the Steiner formula \eqref{eq-stain}
we get that
$$
nW_1(f_2) = \lim\limits_{\delta\to +0}
\delta^{-1}\Bigl(\int_{\mathbb{R}^n}(f_2)_\delta(x)\, dx
- \int_{\mathbb{R}^n}f_2(x)\, dx\Bigr)
$$
where
$(f_2)_\delta(x):=\sup\limits_{|x-y|\le \delta}f(y)$.
Note that
\begin{multline*}
(f_2)_\delta(x) =
\sup\bigl\{f_2(y) [I_{B_2^n}(z)]^{\delta} \colon y = x - \delta z \bigr\}
\le \sup\bigl\{f_2(y) \bigl(\tfrac{f_2(z)}{a\|f_2\|_\infty}\bigr)^{\delta} \colon
y = x - \delta z \bigr\}
\\\le
a^{-\delta}\|f_2\|_\infty^{-\delta}
\sup\bigl\{f_2\bigl(\tfrac{y}{1+\delta} + \tfrac{\delta z}{1+\delta} \bigr)^{1+\delta}
\colon y = x - \delta z \bigr\} =
a^{-\delta}\|f_2\|_\infty^{-\delta} f_2\bigl(\tfrac{x}{1+\delta} \bigr)^{1+\delta}
\le e^{n\delta}f_2\bigl(\tfrac{x}{1+\delta}\bigr)
\end{multline*}
where, in the second inequality, we have used the assumption concerning the
John ellipsoid of the function $f_2$, in the third inequality,
we have used the logarithmic concavity of the function $f_2$, and
in the last inequality, we have used the bound for the number $a$ from theorem \ref{john}.
Thus,
\begin{multline*}
nW_1(f_2)\le \lim\limits_{\delta\to +0}
\delta^{-1}\Bigl(e^{n\delta}\int_{\mathbb{R}^n}f_2(\tfrac{x}{1+\delta})\, dx
- \int_{\mathbb{R}^n}f_2(x)\, dx\Bigr)
\\ =
\lim\limits_{\delta\to +0}
\delta^{-1}\bigl(e^{n\delta}(1+\delta)^n - 1\bigr)\int_{\mathbb{R}^n}f_2(x)\, dx
= 2n\int_{\mathbb{R}^n}f_2(x)\, dx
= 2n\|f_2\|_1^{\frac{n-1}{n}}\|f_2\|_1^{\frac{1}{n}}.
\end{multline*}
Applying the inequality \eqref{eq-i-rat} we obtian the estimate
$$
\|f_2\|_1^{\frac{1}{n}}
\le I.rat(f_2)\cdot \|\mathcal{E}(f_2)\|_1^{\frac{1}{n}}\le
4\sqrt{n}\cdot\|\mathcal{E}(f_2)\|_1^{\frac{1}{n}} \le
4\sqrt{n}\|f_2\|_\infty^{\frac{1}{n}}\omega_n^{\frac{1}{n}}.
$$
Therefore, we get the inequality
$$
n \omega_n^{\frac{1}{n}} \|f_1\|_{\frac{n}{n-1}} \le
2n\|f_2\|_1^{\frac{n-1}{n}}\cdot4\sqrt{n}\|f_2\|_\infty^{\frac{1}{n}}\omega_n^{\frac{1}{n}},
$$
which is equivalent to the announced bound.
The theorem is proved.
\end{proof}

\begin{corollary}\label{cor2}
Let
$f_1\in BV(\mathbb{R}^n)$ and let $f_2\in Q_0^n$.
Assume that
$$
\|D_\theta f_1\|_{\rm TV}\le \|D_\theta f_2\|_{\rm TV}\quad
\forall\theta\in \mathbb{R}^n,\ |\theta|=1.
$$
Then
$$
\|f_1\|_{\frac{n}{n-1}} \le 8e \sqrt{n} \|f_2\|_{\frac{n}{n-1}}$$
If, in addition, we assume that $f_1$ is logarithmically concave, then
$$
\|f_1\|_{\infty}^{\frac{1}{n}} \|f_1\|_1 \le 8e \sqrt{n} \|f_2\|_{\infty}^{\frac{1}{n}} \|f_2\|_1.
$$
\end{corollary}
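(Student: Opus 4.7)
The plan is to deduce both inequalities from Theorem~\ref{T1} together with a single auxiliary reverse H\"older-type inequality for log-concave functions, namely
\[
\|f\|_\infty^{1/n}\,\|f\|_1^{(n-1)/n}\le e\,\|f\|_{n/(n-1)},\qquad f\in Q_0^n.
\]
This reverse comparison (the usual H\"older bound goes the other way) is the only genuinely new computation in the corollary; the rest is routine bookkeeping built on top of Theorem~\ref{T1}.

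To prove the lemma I would first translate $f$ so that its (attained) maximum sits at the origin, which leaves all three norms involved unchanged, and then set $\tilde f:=f/\|f\|_\infty$, so $\tilde f(0)=1$. Log-concavity, combined with $\tilde f(0)=1$, yields the pointwise bound $\tilde f(tx)\ge \tilde f(x)^{t}$ for $t\in(0,1]$; raising to the power $1/t$ and performing the substitution $y=tx$ gives
\[
\|\tilde f\|_{1/t}^{1/t}\ge t^{n}\,\|\tilde f\|_{1}.
\]
Applying this at $t=(n-1)/n$, unwinding $\tilde f=f/\|f\|_\infty$, and then taking the $(n-1)/n$-th root produces the lemma with the explicit constant $(n/(n-1))^{n-1}$, which is at most $e$.

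The first inequality of the corollary is now immediate: applying the lemma to $f_2$ gives $\|f_2\|_\infty^{1/n}\|f_2\|_1^{(n-1)/n}\le e\,\|f_2\|_{n/(n-1)}$, and substituting this bound into the right-hand side of Theorem~\ref{T1} yields $\|f_1\|_{n/(n-1)}\le 8e\sqrt{n}\,\|f_2\|_{n/(n-1)}$. For the second inequality, under the additional assumption that $f_1$ is log-concave, I would apply the same lemma to $f_1$ itself, obtaining
\[
\|f_1\|_\infty^{1/n}\,\|f_1\|_1^{(n-1)/n}\le e\,\|f_1\|_{n/(n-1)}\le 8e\sqrt{n}\,\|f_2\|_\infty^{1/n}\,\|f_2\|_1^{(n-1)/n},
\]
from which the stated estimate follows.

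The only place requiring any care is the reverse H\"older lemma itself, but the inequality $\tilde f(tx)\ge\tilde f(x)^{t}$ reduces its proof to a one-line change-of-variables computation; beyond this and Theorem~\ref{T1} no further ingredient is needed.
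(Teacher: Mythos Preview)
Your approach is essentially identical to the paper's: both isolate the reverse H\"older inequality $\|f\|_\infty^{1/n}\|f\|_1^{(n-1)/n}\le(n/(n-1))^{n-1}\|f\|_{n/(n-1)}\le e\,\|f\|_{n/(n-1)}$ for log-concave $f$ (the paper writes it via $f(x)^{(n-1)/n}f(x_0)^{1/n}\le f\bigl(\tfrac{n-1}{n}x+\tfrac{1}{n}x_0\bigr)$, which is exactly your inequality $\tilde f(tx)\ge\tilde f(x)^t$ at $t=\tfrac{n-1}{n}$), then apply it to $f_2$ for the first bound and to $f_1$ for the second. Note that, just as in the paper's own proof, what your chain actually produces for the second inequality is $\|f_1\|_\infty^{1/n}\|f_1\|_1^{(n-1)/n}\le 8e\sqrt{n}\,\|f_2\|_\infty^{1/n}\|f_2\|_1^{(n-1)/n}$; the exponent $1$ on $\|f_i\|_1$ in the displayed statement appears to be a typo, so your ``from which the stated estimate follows'' should be read accordingly.
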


\begin{proof}
Let $\|f_2\|_\infty = f(x_0)$.
We note that
$$
f_2(x)^\frac{n-1}{n}f_2(x_0)^\frac{1}{n}\le f_2(\tfrac{n-1}{n}x+\tfrac{1}{n}x_0)
$$
due to the logarithmic concavity of $f_2$.
From this bound we get that
\begin{multline*}
\|f_2\|_1^{\frac{n-1}{n}}\le
\Bigl(\int_{\mathbb{R}^n}[f_2(\tfrac{n-1}{n}x+\tfrac{1}{n}x_0)]^\frac{n}{n-1}
[f_2(x_0)]^{-\frac{1}{n-1}}\, dx\Bigr)^{\frac{n-1}{n}}
\\= (\tfrac{n}{n-1})^{n-1}\|f_2\|_{\frac{n}{n-1}}\|f_2\|_{\infty}^{-\frac{1}{n}}
\le e\|f_2\|_{\frac{n}{n-1}}\|f_2\|_{\infty}^{-\frac{1}{n}}.
\end{multline*}
Now, the first announced bound follows from Theorem \ref{T1}.

If we also assume that $f_1$ is logarithmically concave,
then, similarly to the bound above, we have
$$
e^{-1}\|f_1\|_1^{\frac{n-1}{n}}\|f_1\|_{\infty}^{\frac{1}{n}}\le \|f_2\|_{\frac{n}{n-1}}
$$
and the second announced bound again follows from Theorem \ref{T1}.
The corollary is proved.
\end{proof}

\begin{remark}
{\rm
We point out that, in the end of the proof of Theorem \ref{T1},
for a function $f_2 \in Q_0^n$ in John's position, we have actually proved the estimate
$$V(f_2) \le 8 n^{\frac{3}{2}} \omega_n^{\frac{1}{n}} \|f_2\|_1^{\frac{n-1}{n}} \|f_2\|_{\infty}^{\frac{1}{n}} \le c_1 n \|f_2\|_1^{\frac{n-1}{n}} \|f_2\|_{\infty}^{\frac{1}{n}} \le c_2 n \|f_2\|_{\frac{n}{n-1}}.$$
Recall that $V(f_2) = \|\nabla f_2\|_1$ for a function $f_2 \in W^{1,1}(\mathbb{R}^n)$.
Thus, the described bound is actually a reverse Sobolev inequality, which is an analog
of the reverse isoperimetric inequality from \cite{B291}}.
\end{remark}

\section{Functional Shephard problem: a general case}
\label{sect-Sh-2}

Firstly, we prove the following inequality, which, in a sense, is reverse
Alexandrov's inequality. We will use this bound further in this section.

\begin{lemma}
Let $f \in Q_0^n$ be in John's position (i.e. the John ellipsoid $\mathcal{E}(f) = a \|f\|_{\infty} I_{B_2^n}$). Then
$$W_{n-k}(f) \le W_{n-k-1}(f) \frac{n+k+1}{k+1}, \quad k = 0, ..., n-1.$$
\end{lemma}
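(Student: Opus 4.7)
The plan is to reduce the stated inequality to a codimension-one statement inside a generic $(k+1)$-dimensional subspace, obtained by averaging over the Grassmannian $Gr_{n,k+1}$. To this end, I would first establish the two representations
$$
W_{n-k}(f) = \frac{\omega_n}{\omega_{k+1}} \int_{Gr_{n,k+1}} W_1^{(k+1)}(P_H f) \, \nu_{n,k+1}(dH),
$$
$$
W_{n-k-1}(f) = \frac{\omega_n}{\omega_{k+1}} \int_{Gr_{n,k+1}} \|P_H f\|_{L^1(H)} \, \nu_{n,k+1}(dH),
$$
where $W_1^{(k+1)}$ denotes the quermassintegral corresponding to index $j=k$ computed inside the $(k+1)$-dimensional ambient space $H$. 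The first identity follows by combining \eqref{eq-querm} for $W_{n-k}(f)$, the tower property of the Haar measures on the Grassmannians $Gr_{n,k}$ and $Gr_{n,k+1}$, the identity $P_{H'}f = P_{H'}(P_H f)$ for $H' \subset H$, and a second application of \eqref{eq-querm} inside $H$ to the log-concave function $P_H f$; the second identity is \eqref{eq-querm} with $k$ replaced by $k+1$.

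Once this reduction is in place, it suffices to prove the pointwise bound
$$
W_1^{(k+1)}(P_H f) \le \frac{n+k+1}{k+1} \|P_H f\|_{L^1(H)}
$$
for every $H \in Gr_{n,k+1}$. The key observation is that John's position of $f$ in $\mathbb{R}^n$ descends to a one-sided inclusion inside $H$: from $a\|f\|_\infty I_{B_2^n} \le f$ with $a \ge e^{-n}$, together with $\|P_H f\|_\infty = \|f\|_\infty$, one immediately obtains
$$
a\|P_H f\|_\infty I_{B_2^n \cap H} \le P_H f.
$$
I would then replicate the computation from the proof of Theorem~\ref{T1} but inside $H$: combining this inclusion with the log-concavity of $P_H f$ yields
$$
(P_H f)_\delta(y) \le e^{n\delta}\, P_H f\bigl(\tfrac{y}{1+\delta}\bigr), \quad y \in H,
$$
where the fattening is taken inside $H$. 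Integrating over $H$ gives $\int_H (P_H f)_\delta \le e^{n\delta}(1+\delta)^{k+1} \|P_H f\|_{L^1(H)}$, and differentiating at $\delta = 0$ through the Steiner formula \eqref{eq-stain} written inside $H$ delivers the desired bound, since the right-hand derivative equals $n+(k+1)$ while the left-hand derivative equals $(k+1) W_1^{(k+1)}(P_H f)$.

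The main subtlety, and the step I expect to need the most care, is that $P_H f$ need not itself be in John's position within $H$: its intrinsic John ellipsoid may be strictly larger than $B_2^n \cap H$, and its intrinsic constant need not satisfy an analogue of $a' \ge e^{-(k+1)}$. The argument is saved by using only the one-sided inclusion displayed above, which is inherited directly from $f$ and bypasses any intrinsic John ellipsoid of $P_H f$. This is precisely what forces the dimension $n$ (rather than $k+1$) to appear in the exponent $e^{n\delta}$, producing the asymmetric numerator $n + k + 1$ in the final constant.
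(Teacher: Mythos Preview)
Your argument is correct but organized differently from the paper's. The paper works directly in $\mathbb{R}^n$: it sets $\varphi(t)=\|f_t\|_1$, reads off from the Steiner polynomial \eqref{eq-stain} the identity $(W_{n-k}(f_t))'=kW_{n-k+1}(f_t)$, and then bounds $(W_{n-k}(f_t))'\big|_{t=0}$ by plugging the same pointwise estimate $f_t(z)\le e^{nt}f(z/(1+t))$ into the integral formula \eqref{eq-querm} for $W_{n-k}$ over $Gr_{n,k}$; the change of variables in the $k$-dimensional inner integral produces the factor $(1+t)^k e^{nt}$, whose derivative $n+k$ yields $kW_{n-k+1}(f)\le (n+k)W_{n-k}(f)$. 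You instead average over $Gr_{n,k+1}$, using the tower property of Haar measures and $P_{H'}f=P_{H'}(P_Hf)$ to reduce to the codimension-one inequality $(k+1)W_1^{(k+1)}(P_Hf)\le (n+k+1)\|P_Hf\|_{L^1(H)}$ inside each $(k+1)$-dimensional $H$, which you then prove by rerunning the Theorem~\ref{T1} computation on $P_Hf$. Both routes hinge on the same pointwise bound coming from the inherited inclusion $a\|f\|_\infty I_{B_2^n}\le f$ with $a\ge e^{-n}$; your version is more modular in that it isolates the codimension-one mechanism explicitly, while the paper's version avoids the Grassmannian tower identity at the cost of the slightly less obvious differentiation formula for $W_{n-k}(f_t)$.
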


\begin{proof}
1) We recall that $f_t(x):=\sup\limits_{|x-y|\le t} f(y)$.
We now consider the function $\varphi(t)\!=\!\|f_{t}\|_1$. We note that $f_{\delta + \rho} = (f_{\delta})_{\rho}$. Thus, applying the functional Steiner formula \eqref{eq-stain} we get that
$$\varphi(\delta + \rho) = \|f_{\delta + \rho}\|_1 = \sum_{k = 0}^n C_n^k W_k(f_{\delta}) \rho^k.$$
For any fixed $\delta$, the equality above is the Taylor series (polynomial) whose coefficients are the derivatives of the corresponding order, i.e.
$$
\varphi^{(k)}(t) = k! C_n^k W_k(f_{t}).
$$
From the identity $(f^{(k)})' = f^{(k+1)}$ we get
$$(k! C_n^k W_k(f_t))' = (k+1)! C_n^{k+1} W_{k+1}(f_t).$$
In other words, one has
$$(W_k(f_t))' = (k+1) \frac{C_n^{k+1}}{C_n^k} W_{k+1}(f_t) = (n-k) W_{k+1}(f_t),
\quad k = 0, ..., n-1.$$
Replacing  $k$ with $n-k$ above, we get that
\begin{equation}
\label{eq4.1}
(W_{n-k}(f_t))' = k W_{n-k+1}(f_t), \quad k = 1, ..., n.
\end{equation}

2) We recall that (see \eqref{eq-querm})
$$W_{n-k}(f) = \frac{\omega_n}{\omega_k} \int_{G_{n,k}} \int_{H}
\max_{z \in H^{\perp}} f(y + z) \,dy\, \nu_{n,k}(dH).$$
We now bound $(W_{n-k}(f_t))'\big|_{t = 0}$ from above.
By the definition of the derivative,
\begin{multline}\label{eq4.2}
\frac{\omega_k}{\omega_n} (W_{n-k}(f_{t}))'\big|_{t = 0}
\\ =
\lim_{t \rightarrow 0+} \frac{1}{t}
\Bigl( \int_{G_{n,k}} \int_H\max_{z \in H^{\perp}} f_{t}(y + z) \, dy \, \nu_{n,k}(dH)
- \int_{G_{n,k}} \int_H \max_{z \in H^{\perp}} f(y + z) \, dy \, \nu_{n,k}(dH) \Bigr).
\end{multline}
Similarly to the proof of Theorem \ref{T1}, we get
\begin{equation}\label{eq4.3}
f_{t}(z) \le e^{n t}f \bigl( \tfrac{z}{1 + t}\bigr).
\end{equation}
We point out that here we have used the assumption concerning the John's position of the function $f$.
Therefore,
\begin{equation}\label{eq4.3.1}
\int_H \max_{z \in H^{\perp}} f_{t}(y + z) \, dy
\leq \int_H \max_{z \in H^{\perp}} e^{n t}f \bigl( \tfrac{y + z}{1 + t}\bigr) \, dy.
\end{equation}
After the change of variables we get
\begin{equation}\label{eq4.4}
\int_H \max_{z \in H^{\perp}} e^{n t}f \bigl( \tfrac{y + z}{1 + t}\bigr) \, dy = (1 + t)^k e^{n t} \int_H \max_{z \in H^{\perp}} f(y + z) \, dy
\end{equation}

3) Finally, combining \eqref{eq4.2}, \eqref{eq4.3.1}, and \eqref{eq4.4}, we obtain that
\begin{multline*}
\frac{\omega_k}{\omega_n} (W_{n-k}(f_{t}))'\big|_{t = 0} \le \lim_{t \rightarrow 0+} \frac{1}{t} \bigl((1 + t)^k e^{n t} - 1\bigr) \int_{G_{n,k}} \int_H \max_{z \in H^{\perp}} f(y + z) \, dy \, \nu_{n,k}(dH) 
\\= (n + k) \int_{G_{n,k}} \int_H \max_{z \in H^{\perp}} f(y + z) \, dy \, \nu_{n,k}(dH) =
(n+k) \frac{\omega_k}{\omega_n} W_{n-k}(f).
\end{multline*}
From the equality \eqref{eq4.1} we get
$$k W_{n-k+1}(f) = (W_{n-k}(f_{t}))'\big|_{t = 0} \leq (n + k) W_{n-k}(f).$$
Therefore,
$$W_{n-k+1}(f) \leq W_{n-k}(f) \frac{n+k}{k}, \quad k = 1, ..., n,$$
and, replacing $k$ by $k+1$, we get the announced bound
$$W_{n-k}(f) \leq W_{n-k-1}(f) \frac{n+k+1}{k+1}, \quad k = 0, ..., n-1.$$
The lemma is proved.
\end{proof}

\begin{corollary}
\label{cor1}
Let $f \in Q_0^n$ be in John's position (i.e. the John ellipsoid $\mathcal{E}(f) = a \|f\|_{\infty} I_{B_2^n}$). Then
$$W_{n-k}(f) \le b_{n,k} \|f\|_1,$$
where
$$b_{n,k} = \frac{(n+k+1)...(2n)}{(k+1)...n}.$$
\end{corollary}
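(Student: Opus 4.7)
The plan is to telescope the recursive inequality from the preceding lemma down to $W_0(f)$ and then identify $W_0(f)$ with $\|f\|_1$. There is no substantive obstacle here: the statement is a clean consequence of the lemma by iteration, and the only subtle point is keeping the indices straight.

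First I would observe that $W_0(f) = \|f\|_1$. This is immediate from the functional Steiner formula \eqref{eq-stain}: evaluating at $\delta=0$ gives $\|f\|_1 = W_0(f)$. Alternatively, plugging $k=n$ into \eqref{eq-querm} gives $W_0(f) = \frac{\omega_n}{\omega_n}\|P_{\mathbb{R}^n} f\|_{L^1(\mathbb{R}^n)} = \|f\|_1$, since $G_{n,n}$ consists of the single subspace $\mathbb{R}^n$ and $P_{\mathbb{R}^n}f = f$.

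Next I would iterate the bound from the preceding lemma. Applied successively with the parameter taking the values $k, k+1, \ldots, n-1$, the lemma gives
\begin{equation*}
W_{n-k}(f) \le \frac{n+k+1}{k+1} W_{n-k-1}(f) \le \frac{n+k+1}{k+1}\cdot\frac{n+k+2}{k+2}\, W_{n-k-2}(f) \le \cdots \le \Bigl(\prod_{i=k}^{n-1}\frac{n+i+1}{i+1}\Bigr) W_0(f).
\end{equation*}
Each factor in the product is permitted because the lemma is valid for all indices $k = 0, \ldots, n-1$, and in particular for each intermediate index we need. Combined with the identification $W_0(f)=\|f\|_1$, this yields
\begin{equation*}
W_{n-k}(f) \le \frac{(n+k+1)(n+k+2)\cdots(2n)}{(k+1)(k+2)\cdots n}\,\|f\|_1 = b_{n,k}\,\|f\|_1,
\end{equation*}
which is the claimed bound. (In the boundary case $k=n$ the product is empty and the inequality becomes the trivial $W_0(f)\le\|f\|_1$.) The assumption that $f$ be in John's position is inherited from the preceding lemma and used only there; once the recursion is in hand, the corollary follows by pure bookkeeping.
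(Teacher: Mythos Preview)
Your proof is correct and is precisely the paper's approach: iterate the lemma from $W_{n-k}$ down to $W_0$ and use $W_0(f)=\|f\|_1$.
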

\begin{proof}
The estimate in the corollary is obtained by iterative application of the previous lemma since
$W_0(f) = \|f\|_1$.
\end{proof}
\begin{remark}
\label{remark2}
{\rm
We note that
$$b_{n,k} \le 4^{n-k}.$$
Indeed, let $s = n - k$. Then the sequence $b_{n,k} = b_{n,n-s}$ is monotonically
decreasing in $n$ for a fixed $s$ because
$$b_{n,n-s} = \frac{(2n-s+1)\ldots(2n)}{(n-s+1)\ldots n} = \prod_{j = 0}^{s-1} \Big(2 + \frac{j}{n - j} \Big)$$
where $2 + \frac{j}{n - j}$ is monotonically decreasing in $n$. Thus,
$$b_{n,n-s} \le b_{s,0} = C_{2s}^s \le 4^s,$$
and the inequality is proved.}
\end{remark}

\begin{remark}
{\rm
We point out the for a convex body $M$ in John's position, in place of the inequality~\ref{eq4.3},
one actually has a sharper bound $(I_M)_t(z)\le I_M(\frac{z}{1+t})$.
This bound implies monotonicity of the quermassintegrals of a convex body $K$
in John's position, i.e.
$$
W_{n-k}(M) \le W_{n-k-1}(M)\le W_0(M) = \lambda_n(M), \quad k = 0, ..., n-1.
$$
}
\end{remark}

\begin{lemma}
Let $f_1, f_2 \in Q_0^n$. The Shephard condition is invariant with respect to the
nondegenerate linear transformations, i.e. if
$$\|P_H f_1\|_{L^1(H)} \le \|P_H f_2\|_{L^1(H)} \quad \forall H \in G_{n,k},$$
then, $\forall A \in GL(n)$, one also has
$$\|P_H A f_1\|_{L^1(H)} \le \|P_H A f_2\|_{L^1(H)} \quad \forall H \in G_{n,k}.$$
\end{lemma}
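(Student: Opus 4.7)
The plan is to derive an explicit formula expressing $\|P_H(Af)\|_{L^1(H)}$ as a constant (depending only on $H$ and $A$) times $\|P_{\tilde H}f\|_{L^1(\tilde H)}$ for a suitable $\tilde H \in G_{n,k}$ determined by $H$ and $A$. Once this is done the conclusion will be immediate: applying the formula to both $f_1$ and $f_2$ with the same constant and invoking the hypothesis on $\|P_{\tilde H} f_i\|_{L^1(\tilde H)}$ yields the desired inequality.

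First, I would unfold the definitions. Using $Af(x)=f(A^{-1}x)$, for any $y\in H$ one gets
$$P_H(Af)(y)=\sup_{z\in H^\perp} f(A^{-1}y+A^{-1}z).$$
Set $\tilde K:=A^{-1}H^\perp\in G_{n,n-k}$ and $\tilde H:=\tilde K^\perp\in G_{n,k}$. Decompose $A^{-1}y=u+v$ with $u\in\tilde H$, $v\in\tilde K$; since $A^{-1}z$ runs through all of $\tilde K$ as $z$ runs through $H^\perp$, the vector $v+A^{-1}z$ also runs through all of $\tilde K$. Hence
$$P_H(Af)(y)=\sup_{w\in\tilde K} f(u+w)=(P_{\tilde H}f)(P_{\tilde H}A^{-1}y).$$

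Next, I would analyze the linear map $\Phi\colon H\to\tilde H$, $\Phi y:=P_{\tilde H}A^{-1}y$. Its kernel consists of $y\in H$ with $A^{-1}y\in\tilde K=A^{-1}H^\perp$, i.e.\ $y\in H\cap H^\perp=\{0\}$, so $\Phi$ is a linear isomorphism between two $k$-dimensional Euclidean spaces. Performing the change of variables $\tilde y=\Phi y$ gives
$$\|P_H(Af)\|_{L^1(H)}=\int_H(P_{\tilde H}f)(\Phi y)\,dy=|\det\Phi|^{-1}\int_{\tilde H}(P_{\tilde H}f)(\tilde y)\,d\tilde y=|\det\Phi|^{-1}\|P_{\tilde H}f\|_{L^1(\tilde H)},$$
where $|\det\Phi|$ is the absolute value of the Jacobian computed with respect to any orthonormal bases of $H$ and $\tilde H$. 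The crucial observation is that both $\tilde H$ and $|\det\Phi|$ are functions of $H$ and $A$ alone, and do not depend on the particular function being projected.

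To conclude, apply the derived identity to $f_1$ and $f_2$ with the same $H$ and $A$. Since $\tilde H\in G_{n,k}$, the hypothesis gives $\|P_{\tilde H}f_1\|_{L^1(\tilde H)}\le\|P_{\tilde H}f_2\|_{L^1(\tilde H)}$, and multiplying through by the positive factor $|\det\Phi|^{-1}$ yields $\|P_HAf_1\|_{L^1(H)}\le\|P_HAf_2\|_{L^1(H)}$ for every $H\in G_{n,k}$. I do not expect a real obstacle here: the only subtlety is choosing the right candidate for $\tilde H$, and the natural choice $\tilde H=(A^{-1}H^\perp)^\perp$ is dictated precisely by the need to express the supremum in $P_H(Af)$ as a supremum of $f$ along a full orthogonal complement of some $k$-plane.
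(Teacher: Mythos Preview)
Your proof is correct and essentially identical to the paper's: both establish the key identity $\|P_H(Af)\|_{L^1(H)}=|\det\Phi|^{-1}\|P_{\tilde H}f\|_{L^1(\tilde H)}$ with the same map $\Phi=P_{\tilde H}\circ A^{-1}|_H$, the only cosmetic difference being that the paper writes the target subspace as $A^*H$ while you write it as $(A^{-1}H^\perp)^\perp$, and these coincide by the elementary identity $(A^{-1}H^\perp)^\perp=A^*H$.
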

\begin{proof}
Let $f \in Q_0^n$. We recall that $(Af)(x) := f(A^{-1} x)$. By the  definition
of the projection of a function, we have
$$
\|P_H A f\|_{L^1(H)} = \int_H \max_{z \in H^{\perp}} f(A^{-1} (y + z)) \, dy.
$$
We note that
$$\langle A^* y, A^{-1} z\rangle = \langle y, A A^{-1} z\rangle = \langle y,z \rangle = 0
\quad \forall\, y \in H, z \in H^{\perp}.$$
Therefore, $A^{-1} (H^{\perp}) \perp A^* H$.
By the equality of dimensions, we have
$A^{-1}(H^{\perp}) = (A^* H)^{\perp}$.
We now write $A^{-1} (y + z)$ as a sum of vectors from $A^* H$ and $(A^* H)^{\perp}$
as follows
$$A^{-1}(y+z) = (A^{-1} y - P_{(A^* H)^{\perp}} (A^{-1} y)) +
(P_{(A^* H)^{\perp}} (A^{-1} y) + A^{-1} z).$$
Here the first term $A^{-1} y - P_{(A^* H)^{\perp}} (A^{-1} y) = P_{A^* H} (A^{-1} y)$ is contained in $A^* H$ and the second term is contained in $(A^* H)^{\perp}$.
Since $A^{-1}(H^{\perp}) = (A^* H)^{\perp}$,
$$\int_H \max_{z \in H^{\perp}} f(A^{-1} (y + z)) \, dy = \int_H \max_{\widetilde{z} \in (A^*H)^{\perp}} f(P_{A^* H} (A^{-1} y) + \widetilde{z}) \, dy.$$
Let $S = P_{A^* H} \circ A^{-1}\colon H\to A^*H$.
We now show that this mapping is a bijection in order
to further make a change of variables in the integral.
Assume $Sy = 0$ for some $y \in H$, then $P_{A^* H} (A^{-1} y) = 0$ and $A^{-1} y \perp A^* H$
implying that $A^{-1} y \in (A^* H)^{\perp} = A^{-1} (H^{\perp})$. Therefore,
$y \in H^{\perp}$ and $y \in H$ implying that $y = 0$. So, $S$ is injective mapping.
The surjectivity of $S$ follows from its injectivity and
the equality of dimensions of subspaces $H$ and $A^* H$.
Finally, we can change the variables and obtain the equality
$$\int_H \max_{\widetilde{z} \in (A^*H)^{\perp}} f(S y + \widetilde{z}) \, dy = \frac{1}{|\det(S)|} \int_{A^* H} \max_{\widetilde{z} \in (A^*H)^{\perp}} f(\widetilde{y} + \widetilde{z}) \, d\widetilde{y}.$$
Thus,
$$\|P_H A f\|_{L^1(H)} = \frac{1}{|\det(S)|} \|P_{A^* H} f\|_{L^1(A^* H)}$$
where $S$ depends only on $A$ and $H$. So, the inequality
$$\|P_{A^* H} f_1\|_{L^1(A^* H)} \le \|P_{A^* H} f_2\|_{L^1(A^* H)}$$
implies
$$\|P_H A f_1\|_{L^1(H)} \le \|P_H A f_2\|_{L^1(H)}.$$
The lemma is proved.
\end{proof}

\begin{theorem}[The functional Shephard problem]\label{T2}
Let $f_1, f_2 \in Q_0^n$ and assume that the Shephard condition is satisfied:
\begin{equation}\label{eq-shep}
\|P_H f_1\|_{L^1(H)} \le \|P_H f_2\|_{L^1(H)} \quad \forall H \in G_{n,k}.
\end{equation}
Then
$$\|f_1\|_{\frac{n}{k}} \leq (16 \sqrt{n})^{n-k} \cdot \|f_2\|_1^{\frac{k}{n}} \cdot \|f_2\|_{\infty}^{\frac{n-k}{n}}.$$
\end{theorem}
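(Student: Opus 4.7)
The plan is to follow the same high-level strategy as the proof of Theorem~\ref{T1}, but with the Sobolev/Steiner step replaced by a direct combination of the (forward) Alexandrov inequality from Theorem~\ref{alexandrov} with the reverse Alexandrov-type bound encoded in Corollary~\ref{cor1}. All ingredients are already in place in the excerpt, so the proof is mostly a matter of assembling them and bookkeeping the constants.

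The first step is the linear invariance reduction. Under $f\mapsto Af$ with $A\in GL(n)$, one checks that $\|Af_1\|_{n/k}=|\det A|^{k/n}\|f_1\|_{n/k}$, while $\|Af_2\|_1^{k/n}\|Af_2\|_\infty^{(n-k)/n}=|\det A|^{k/n}\|f_2\|_1^{k/n}\|f_2\|_\infty^{(n-k)/n}$, so both sides of the claim scale identically. By the lemma immediately preceding the theorem, the Shephard hypothesis \eqref{eq-shep} is also preserved. Hence I may replace $(f_1,f_2)$ by $(Af_1,Af_2)$ for the linear map $A$ sending the John ellipsoid of $f_2$ onto $B_2^n$, so that $f_2$ is in John's position, i.e.\ $\mathcal{E}(f_2)=a\|f_2\|_\infty I_{B_2^n}$.

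The second step is to apply Alexandrov's inequality (Theorem~\ref{alexandrov}) to $f_1$, which gives
$$
\|f_1\|_{n/k}\le \omega_n^{-(n-k)/n} W_{n-k}(f_1).
$$
Pointwise integration of the Shephard hypothesis against $\nu_{n,k}$, using the integral representation \eqref{eq-querm}, yields $W_{n-k}(f_1)\le W_{n-k}(f_2)$. Since $f_2$ is now in John's position, Corollary~\ref{cor1} together with the bound $b_{n,k}\le 4^{n-k}$ from Remark~\ref{remark2} gives
$$
W_{n-k}(f_2)\le 4^{n-k}\|f_2\|_1.
$$

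The third step is to split $\|f_2\|_1=\|f_2\|_1^{k/n}\cdot\|f_2\|_1^{(n-k)/n}$ and control the second factor by the John ellipsoid volume-ratio inequality \eqref{eq-i-rat}. Since $\|\mathcal{E}(f_2)\|_1=a\|f_2\|_\infty\omega_n\le \|f_2\|_\infty\omega_n$, \eqref{eq-i-rat} implies
$$
\|f_2\|_1^{(n-k)/n}\le (4\sqrt{n})^{n-k}\omega_n^{(n-k)/n}\|f_2\|_\infty^{(n-k)/n}.
$$
Plugging this back, the $\omega_n^{\pm(n-k)/n}$ factors cancel, and the constants combine as $4^{n-k}(4\sqrt{n})^{n-k}=(16\sqrt{n})^{n-k}$, producing the announced estimate. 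The only subtle point I anticipate is verifying the linear-invariance reduction cleanly, namely that the John-position normalization is applied only to $f_2$ while the Alexandrov bound is applied only to $f_1$; after that, the proof is a short chain of the already established inequalities.
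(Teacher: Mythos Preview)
Your proposal is correct and follows essentially the same route as the paper: reduce to $f_2$ in John's position via the invariance lemma, apply the functional Alexandrov inequality to $f_1$, pass to $W_{n-k}(f_2)$ via the Shephard hypothesis, bound $W_{n-k}(f_2)\le b_{n,k}\|f_2\|_1$ by Corollary~\ref{cor1}, and finish with \eqref{eq-i-rat} and Remark~\ref{remark2}. The only cosmetic point is that the John ellipsoid need not be centered at the origin, so the reduction map is affine rather than linear (or one first translates, as in the proof of Theorem~\ref{T1}); the paper handles this by invoking invariance under affine maps and Lemma~2.3 of \cite{A-GMJR}.
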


\begin{proof}
We note that the assumption \eqref{eq-shep} and the assertion of the theorem do not change under the action of any nondegenerate affine mapping (see the lemma above) and after multiplication of
both functions $f_1$, $f_2$ by a positive constant.
Since $\mathcal{E}(Af_2) = A\mathcal{E}(f_2)$
for any non-degenerate affine mapping $A$ (see Lemma $2.3$ in \cite{A-GMJR}),
without loss of generality, we may assume that $f_2$ is in John's position
(i.e. the John ellipsoid $\mathcal{E}(f_2) = a \|f_2\|_{\infty} I_{B_2^n}$).
By functional Alexandrov's inequality from Theorem \ref{alexandrov}, we get
$$\|f_1\|_{\frac{n}{k}} \le \frac{\omega_n^{\frac{k}{n}}}{\omega_k} \int_{G_{n,k}} \|P_H f_1\|_{L^1(H)}\, \nu_{n, k}(dH).$$
Applying the Shephard condition \eqref{eq-shep} we get the inequality
$$\frac{\omega_n^{\frac{k}{n}}}{\omega_k} \int_{G_{n,k}} \|P_H f_1\|_{L^1(H)}\, \nu_{n, k}(dH) \le \frac{\omega_n^{\frac{k}{n}}}{\omega_k} \int_{G_{n,k}} \|P_H f_2\|_{L^1(H)}\, \nu_{n, k}(dH) = \omega_n^{-\frac{n-k}{n}} W_{n-k}(f_2).$$
By Corollary \ref{cor1}
$$
W_{n-k}(f_2) \le b_{n,k} \|f_2\|_1.
$$
Therefore,
$$\|f_1\|_{\frac{n}{k}} \le \omega_n^{-\frac{n-k}{n}} b_{n,k} \|f_2\|_1.$$
By the inequality \eqref{eq-i-rat}
$$\|f_2\|_1^{\frac{1}{n}} = (a \|f_2\|_{\infty} \omega_n)^{\frac{1}{n}} I.rat(f_2) \le \|f_2\|_{\infty}^{\frac{1}{n}} \omega_n^{\frac{1}{n}} 4 \sqrt{n}.$$
Thus,
$$\|f_1\|_{\frac{n}{k}} \le \omega_n^{-\frac{n-k}{n}} b_{n,k} \|f_2\|_1^{\frac{k}{n}} \|f_2\|_1^{\frac{n-k}{n}} \le (4 \sqrt{n})^{n-k} b_{n,k} \|f_2\|_1^{\frac{k}{n}} \|f_2\|_{\infty}^{\frac{n-k}{n}}.$$
Applying Remark \ref{remark2} we get the estimate
$$\|f_1\|_{\frac{n}{k}} \le (16 \sqrt{n})^{n-k} \|f_2\|_1^{\frac{k}{n}} \|f_2\|_{\infty}^{\frac{n-k}{n}}.$$
The theorem is proved.
\end{proof}

\begin{remark}
{\rm
The inequality obtained above is equivalent to the following one
$$\int_{\mathbb{R}^n} f_1^{\frac{n}{k}} \, dx \leq (16 \sqrt{n})^{(n-k) \frac{n}{k}} \|f_2\|_{\infty}^{\frac{n-k}{k}} \cdot \int_{\mathbb{R}^n} f_2 \, dx.$$
In the case when $f_1 = I_A$, $f_2 = I_B$, where $A$, $B$ are convex bodies
satisfying the Shephard condition, we get
\begin{equation}\label{eq-b-shep}
\lambda_n(A) \le (16 \sqrt{n})^{(n-k) \frac{n}{k}} \lambda_n(B).
\end{equation}
In particular, for a fixed codimension $s = n-k$, we have
$$(16 \sqrt{n})^{\frac{s (k + s)}{k}} = (16 \sqrt{n})^{s} \cdot (16 \sqrt{s + k})^{\frac{s^2}{k}}.$$
The second factor above tends to $1$ when $k$ tends to $\infty$. Therefore,
$$\lambda_n(A) \le C_s n^{s/2} \lambda_n(B)$$
where $C_s$ is a positive constant dependent only on $s$.}
\end{remark}

\begin{remark}
{\rm
In \cite{GK} (see Theorems $1.7$, $1.8$ there) the following two bounds were obtained.
\begin{theorem}[A. Giannopoulus, A. Koldobsky]\label{th-GK}
Let $A$ and $B$ be two convex bodies in $\mathbb{R}^n$ satisfying Shephard condition
$$|P_H(A)| \le |P_H(B)| \quad \forall H \in G_{n,k}.$$
Then
$$\lambda_n(A)^{\frac{1}{n}} \le c_1 \sqrt{\frac{n}{k}} \log\Bigl(\frac{e n}{k}\Bigr) \lambda_n(B)^{\frac{1}{n}},$$
$$\lambda_n(A)^{\frac{1}{n}} \le c_2 \log(n) \lambda_n(B)^{\frac{1}{n}}$$
where $c_1$ and $c_2$ are some absolute positive constants.
\end{theorem}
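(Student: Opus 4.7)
The plan is to leverage the classical Alexandrov inequality for convex bodies (the $f=I_K$ case of Theorem~\ref{alexandrov}), which asserts
$$\lambda_n(A)^{k/n}\le \frac{\omega_n^{k/n}}{\omega_k}\int_{G_{n,k}}\lambda_k(P_H(A))\,\nu_{n,k}(dH).$$
Combined with the Shephard hypothesis, this reduces the task to a reverse averaged Alexandrov-type bound for $B$, namely
$$\int_{G_{n,k}}\lambda_k(P_H(B))\,\nu_{n,k}(dH)\le C_{n,k}\,\omega_k\,\omega_n^{-k/n}\,\lambda_n(B)^{k/n},$$
with $C_{n,k}$ no larger than $\bigl(c_1\sqrt{n/k}\log(en/k)\bigr)^k$ (respectively $(c_2\log n)^k$). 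Taking $n$-th roots then yields the two announced inequalities.

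My first move would be to normalize $B$ to a convenient position. The naive choice of John's position together with Corollary~\ref{cor1} applied to $I_B$ only produces the much weaker factor $b_{n,k}\le 4^{n-k}$ already recorded in the remark following Theorem~\ref{T2}. To gain a logarithm I would instead place $B$ in isotropic position, where the mean $k$-projection volume is controlled by the isotropic constant $L_B$, or in an $M$-position, where it is controlled through the covering numbers $N(B,tB_2^n)$. The passage from $k=n-1$ (where a standard $K$-convexity / $MM^{*}$ estimate suffices) to general $k$ would be carried out through Milman's low-$M^{*}$ estimate: for a random $H\in G_{n,k}$ the circumradius of $P_H(B)$ is of order $\sqrt{n/(n-k)}\,M^{*}(B^\circ)$, and integrating (after unwinding the normalization) produces the desired $\sqrt{n/k}\log(en/k)$ factor.

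For the second bound $c_2\log n$, independent of $k$, I would instead invoke the isomorphic slicing inequality: interpreting the averaged projections of $B$ as support-function data of an associated projection body, I would apply a reverse Urysohn-type inequality together with the best current bound on the isotropic constant, $L_n\lesssim \log n$, to bypass any $k$-dependence.

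The main obstacle, I expect, lies precisely in this reverse averaged Alexandrov step. Alexandrov's inequality is sharp on ellipsoids in the direct direction, so the reverse direction requires both a nontrivial positioning of $B$ and a genuine input from either the slicing conjecture (for $\log n$) or Milman's quotient-of-subspace machinery (for $\sqrt{n/k}\log(en/k)$). Keeping the constant uniform in $k$ — especially in the regime $k$ comparable to $n$ — and avoiding spurious factors of $\sqrt n$ in the passage from an average to a worst-case estimate on the Grassmannian, is the most delicate technical point.
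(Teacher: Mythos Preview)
The paper does not prove Theorem~\ref{th-GK}. It is quoted verbatim from \cite{GK} (Theorems~1.7 and~1.8 there) inside a remark, solely in order to compare the authors' own bound \eqref{eq-b-shep} against the Giannopoulos--Koldobsky estimates. There is therefore no ``paper's own proof'' of this statement to weigh your proposal against.

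As for the proposal itself: it is a reasonable outline of the circle of ideas behind \cite{GK}, but it is not a proof. You correctly identify that Alexandrov's inequality plus the Shephard hypothesis reduce everything to a reverse bound on $\int_{G_{n,k}}\lambda_k(P_H B)\,\nu_{n,k}(dH)$, and you correctly flag that John's position only gives the crude $4^{n-k}$ already in Remark~\ref{remark2}. But from there the text becomes programmatic: ``I would place $B$ in isotropic position \ldots\ or in an $M$-position'', ``I would invoke the isomorphic slicing inequality'', and you yourself name the reverse averaged Alexandrov step as ``the main obstacle''. None of the actual work---the precise positioning, the quantitative low-$M^*$ or covering-number estimate, the passage to a bound independent of $k$---is carried out. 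In particular, the second inequality in \cite{GK} does not follow from the bound $L_n\lesssim\log n$ alone; that route would give a constant depending on the slicing problem, whereas the $c_2\log n$ bound in \cite{GK} is unconditional and uses different machinery (surface-area minimizing position and estimates for quermassintegrals). So the sketch points in plausible directions but leaves the substantive steps open.
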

We now compare our result with these estimates.
From the inequality \eqref{eq-b-shep} follows that
$$
\frac{\lambda_n(A)^{\frac{1}{n}}}{\lambda_n(B)^{\frac{1}{n}}} \le (16 \sqrt{n})^{\frac{s}{n-s}}
\to 1
$$
when $ n \to \infty$ and the codimension $s$ is bounded by some absolute constant $M$.
Thus, there is a positive constant $C$,
depending only on $M$, such that
$$
\lambda_n(A)^{\frac{1}{n}} \le C \lambda_n(B)^{\frac{1}{n}}.
$$
The first inequality from Theorem \ref{th-GK}
also gives
$\lambda_n(A)^{\frac{1}{n}} \le C \lambda_n(B)^{\frac{1}{n}}$
when the codimension $s$ is bounded. However, Theorem \ref{th-GK} implies
a similar bound under the less restrictive assumption that $\frac{s}{k}$ is bounded.
On the other hand,  for the quantity
$\tfrac{\lambda_n(A)}{\lambda_n(B)}$, both bounds in Theorem \ref{th-GK}
depend on the dimension $n$ exponentially,
while the inequality \eqref{eq-b-shep} provides
a polynomial in $n$ estimate when the codimension $s$ is bounded.
}
\end{remark}

Similarly to Corollary \ref{cor2} above, we have the following two bounds.
\begin{corollary}
Let $f_1, f_2 \in Q_0^n$ and assume that the Shephard condition \eqref{eq-shep} is satisfied.
Then
$$\|f_1\|_{\frac{n}{k}} \leq (16 e \sqrt{n})^{n-k} \cdot \|f_2\|_{\frac{n}{k}},$$
$$\|f_1\|_1^{\frac{k}{n}} \cdot \|f_1\|_{\infty}^{\frac{n-k}{n}} \le (16 e \sqrt{n})^{n-k} \|f_2\|_1^{\frac{k}{n}} \cdot \|f_2\|_{\infty}^{\frac{n-k}{n}}$$
\end{corollary}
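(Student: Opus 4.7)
The plan is to mimic the proof of Corollary \ref{cor2} in the general-codimension setting. Theorem \ref{T2} supplies
$$
\|f_1\|_{\frac{n}{k}} \le (16\sqrt{n})^{n-k}\,\|f_2\|_1^{\frac{k}{n}}\|f_2\|_\infty^{\frac{n-k}{n}},
$$
so both displayed inequalities of the corollary will follow at once if we establish the reverse H\"older-type estimate
$$
\|f\|_1^{\frac{k}{n}}\|f\|_\infty^{\frac{n-k}{n}}\le e^{n-k}\|f\|_{\frac{n}{k}}\qquad\forall f\in Q_0^n.
$$
Applied with $f=f_2$ and combined with Theorem \ref{T2} it yields the first inequality; applied with $f=f_1$ to the left-hand side and then feeding Theorem \ref{T2} into the right-hand side it yields the second.

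To prove the auxiliary estimate, pick $x_0\in\mathbb{R}^n$ with $f(x_0)=\|f\|_\infty$ (such a point exists because every $f\in Q_0^n$ attains its maximum). Log-concavity of $f$ applied with the parameter $t=k/n$ gives
$$
f(x)^{\frac{k}{n}}\|f\|_\infty^{\frac{n-k}{n}}\le f\bigl(\tfrac{k}{n}x+\tfrac{n-k}{n}x_0\bigr).
$$
Raising this to the power $n/k$ and integrating over $\mathbb{R}^n$ (making the change of variables $y=\tfrac{k}{n}x+\tfrac{n-k}{n}x_0$ with Jacobian $(n/k)^n$) produces
$$
\|f\|_1\,\|f\|_\infty^{\frac{n-k}{k}}\le\bigl(\tfrac{n}{k}\bigr)^{n}\|f\|_{\frac{n}{k}}^{\frac{n}{k}}.
$$
Taking the $(k/n)$-th power and invoking the elementary inequality $(n/k)^k\le e^{n-k}$ (equivalent to $\log(n/k)\le n/k-1$) delivers the desired estimate.

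The argument is essentially a verbatim transcription of Corollary \ref{cor2}, the only new ingredient being the more general exponent configuration. I do not anticipate any substantial obstacle: the only mild bookkeeping points are the convexity-combination exponent $k/n$ in the log-concavity step and the constant $(n/k)^k\le e^{n-k}$, which packages cleanly with the factor $(16\sqrt{n})^{n-k}$ from Theorem \ref{T2} to produce the announced constant $(16e\sqrt{n})^{n-k}$.
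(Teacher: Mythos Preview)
Your proposal is correct and is essentially the paper's own argument: the paper packages your reverse H\"older-type estimate $\|f\|_1^{k/n}\|f\|_\infty^{(n-k)/n}\le (n/k)^k\|f\|_{n/k}\le e^{n-k}\|f\|_{n/k}$ as a separate lemma (Lemma~\ref{lemma3}), proved by the same log-concavity and change-of-variables computation, and then deduces the corollary from Theorem~\ref{T2} exactly as you do.
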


\begin{proof}
The corollary follows from Theorem \ref{T2} and the lemma below.
\end{proof}

\begin{lemma} \label{lemma3}
Let $f \in Q_0^n$. Then
$$\|f\|_1^{\frac{k}{n}} \cdot \|f\|_{\infty}^{\frac{n-k}{n}} \le \Bigl(\frac{n}{k}\Bigr)^{k} \|f\|_{\frac{n}{k}} \le e^{n-k} \|f\|_{\frac{n}{k}}.$$
\end{lemma}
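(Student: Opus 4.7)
The plan is to prove the left-hand inequality by the same log-concavity \emph{tilt toward the maximum} trick that was used in Corollary~\ref{cor2}, now with the interpolation parameter $k/n$ instead of $(n-1)/n$, and to handle the right-hand inequality by the elementary bound $\log x\le x-1$.

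For the left-hand inequality, I would start by picking $x_0\in\mathbb{R}^n$ with $f(x_0)=\|f\|_\infty$; such a point exists by the discussion in Section~\ref{sect-N}. Log-concavity applied with weights $t=k/n$ and $1-t=(n-k)/n$ gives, for every $x\in\mathbb{R}^n$,
$$
f(x)^{k/n}\,\|f\|_\infty^{(n-k)/n}\;\le\; f\Bigl(\tfrac{k}{n}x+\tfrac{n-k}{n}x_0\Bigr).
$$
Raising this to the power $n/k$ and integrating in $x$, the right-hand side becomes $\int f(\tfrac{k}{n}x+\tfrac{n-k}{n}x_0)^{n/k}\,dx$, which after the affine change of variables $y=\tfrac{k}{n}x+\tfrac{n-k}{n}x_0$ equals $(n/k)^n\|f\|_{n/k}^{n/k}$. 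On the left-hand side one obtains $\|f\|_1\,\|f\|_\infty^{(n-k)/k}$. Taking the $k/n$-th power of the resulting inequality yields exactly
$$
\|f\|_1^{k/n}\,\|f\|_\infty^{(n-k)/n}\;\le\;\Bigl(\tfrac{n}{k}\Bigr)^{k}\,\|f\|_{n/k},
$$
which is the first claimed bound.

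For the right-hand inequality, I would note that taking logarithms it is equivalent to $k\log(n/k)\le n-k$, i.e. $\log(n/k)\le n/k-1$, and this is an instance of the elementary inequality $\log x\le x-1$ for $x>0$ applied to $x=n/k$.

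I do not expect any real obstacle: the only slightly delicate point is choosing the interpolation parameter $t=k/n$ (rather than some other convex combination), so that after raising to the $n/k$-th power one recovers $f$ itself on the left and a power of $\|f\|_\infty$ that can be pulled out, and the affine change of variables produces exactly the factor $(n/k)^n$ that matches the desired constant $(n/k)^k$ after the final $k/n$-th power.
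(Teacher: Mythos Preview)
Your proposal is correct and follows essentially the same approach as the paper: both pick $x_0$ with $f(x_0)=\|f\|_\infty$, apply log-concavity with weights $k/n$ and $(n-k)/n$, raise to the power $n/k$, integrate with the affine change of variables, and then take the $k/n$-th root. Your justification of the right-hand inequality via $\log x\le x-1$ is equivalent to the paper's use of $(1+(n-k)/k)^k\le e^{n-k}$.
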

\begin{proof}
Let $\|f\|_{\infty} = f(x_0)$. We note that
$$f(x)^{\frac{k}{n}} \cdot f(x_0)^{\frac{n-k}{n}} \le f(\tfrac{k}{n}x + \tfrac{n-k}{n}x_0)$$
due to the logarithmic concavity of $f$. From this bound, we get that
\begin{multline*}
\|f\|_1^{\frac{k}{n}}\le
\Bigl(\int_{\mathbb{R}^n}[f(\tfrac{k}{n}x+\tfrac{n-k}{n}x_0)]^\frac{n}{k}
[f(x_0)]^{-\frac{n-k}{k}}\, dx\Bigr)^{\frac{k}{n}}
\\= (\tfrac{n}{k})^{k} \|f\|_{\frac{n}{k}} \|f\|_{\infty}^{-\frac{n-k}{n}}
= (1 + \tfrac{n-k}{k})^{k} \|f\|_{\frac{n}{k}} \|f\|_{\infty}^{-\frac{n-k}{n}}
\le e^{n-k} \|f\|_{\frac{n}{k}} \|f\|_{\infty}^{-\frac{n-k}{n}}
\end{multline*}
and the lemma is proved.
\end{proof}

\section{Functional Busemann--Petty problem}
\label{sect-BP}
In this and the next sections we will need the
functional analog of the following classical Grinberg's
inequality for convex bodies (see \cite{Grinb}):
Let $M$ be a convex body in $\mathbb{R}^n$, then, for every $k\in [1, n]\cap \mathbb{N}$, one has
\begin{equation}\label{grinberg}
\int_{G_{n,k}} \bigl[\lambda_k(M \cap H)\bigr]^n \, \nu_{n, k}(dH)\le
\frac{\omega_k^n}{\omega_n^k} \cdot \bigl[\lambda_n(M)\bigr]^k.
\end{equation}

\begin{theorem}[The functional analog of Grinberg's inequality]
\label{funcgrinberg}
Let $f\in Q_0^n$. Then, for every $k\in [1, n]\cap \mathbb{N}$,
one has
$$
\int_{G_{n,k}} \|S_H f\|_{L^1(H)}^n \, \nu_{n, k}(dH)\le
\frac{\omega_k^n}{\omega_n^k} \cdot \|f\|_1^{k}\cdot \|f\|_{\infty}^{n-k} .
$$
\end{theorem}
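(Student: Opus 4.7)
The plan is to reduce the functional statement to the classical Grinberg inequality \eqref{grinberg} applied to each superlevel set of $f$, stitching these estimates together via the layer-cake representation, Minkowski's integral inequality, and Hölder's inequality.

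First I would observe that, since $f\in Q_0^n$ is log-concave and upper semicontinuous with the decay bound \eqref{eq-exp}, every level set $A_t(f)=\{x\colon f(x)\ge t\}$ is a compact convex set for $t\in(0,\|f\|_\infty)$, so the classical inequality \eqref{grinberg} applies to it. The layer-cake identity then gives
$$
\|S_H f\|_{L^1(H)} = \int_H f(y)\, dy = \int_0^{\|f\|_\infty}\lambda_k\bigl(A_t(f)\cap H\bigr)\, dt
$$
for every $H\in G_{n,k}$. Applying Minkowski's integral inequality with the outer $L^n(\nu_{n,k})$-norm yields
$$
\Bigl(\int_{G_{n,k}}\|S_H f\|_{L^1(H)}^n\, \nu_{n,k}(dH)\Bigr)^{1/n} \le \int_0^{\|f\|_\infty}\Bigl(\int_{G_{n,k}} \lambda_k(A_t(f)\cap H)^n\, \nu_{n,k}(dH)\Bigr)^{1/n}\, dt,
$$
and the classical Grinberg inequality \eqref{grinberg} bounds the inner integral by $\frac{\omega_k^n}{\omega_n^k}\lambda_n(A_t(f))^k$, so that the right-hand side above is at most $\frac{\omega_k}{\omega_n^{k/n}}\int_0^{\|f\|_\infty}\lambda_n(A_t(f))^{k/n}\, dt$.

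Finally, Hölder's inequality on $[0,\|f\|_\infty]$ with conjugate exponents $n/k$ and $n/(n-k)$, together with the layer-cake formula $\int_0^{\|f\|_\infty}\lambda_n(A_t(f))\, dt = \|f\|_1$, gives
$$
\int_0^{\|f\|_\infty}\lambda_n(A_t(f))^{k/n}\, dt \le \Bigl(\int_0^{\|f\|_\infty}\lambda_n(A_t(f))\, dt\Bigr)^{k/n}\cdot\|f\|_\infty^{(n-k)/n} = \|f\|_1^{k/n}\|f\|_\infty^{(n-k)/n}.
$$
Chaining these bounds and raising to the $n$-th power produces exactly the stated inequality.

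I do not anticipate a genuine obstacle: log-concavity is used only to guarantee convexity of the level sets (so that classical Grinberg applies), and the rest is the standard layer-cake/Minkowski/Hölder triple. The only point requiring a brief verification is measurability in the Minkowski step — namely, that $(t,H)\mapsto \lambda_k(A_t(f)\cap H)$ is jointly measurable and that the integral is finite — both of which follow from upper semicontinuity of $f$ together with the exponential decay \eqref{eq-exp}.
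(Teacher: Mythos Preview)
Your proposal is correct and follows essentially the same route as the paper's own proof: layer-cake representation of $\|S_H f\|_{L^1(H)}$, Minkowski's integral inequality to pull the $t$-integral outside the $L^n(\nu_{n,k})$-norm, classical Grinberg \eqref{grinberg} on each level set, and then H\"older on $[0,\|f\|_\infty]$ with exponents $n/k$ and $n/(n-k)$. The only cosmetic difference is that the paper initially writes the $t$-integral over $(0,+\infty)$ and restricts to $[0,\|f\|_\infty]$ after applying Grinberg, whereas you do so from the outset.
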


\begin{proof}
By the definition
$$\|S_H f\|_{L^1(H)} = \int_H f(y)\, dy.$$
We recall that $A_t(f):=\{x\in \mathbb{R}^n\colon f(x)\ge t\}$.
Applying Fubini's theorem and the generalized Minkowski inequality we get
\begin{multline*}
\int_{G_{n,k}} \Bigl(\int_H f(y)\, dy\Bigr)^n \, \nu_{n, k}(dH)
= \int_{G_{n,k}} \Bigl(\int_{0}^{+\infty} \lambda_k\bigl(A_t(f) \cap H\bigr) \, dt\Bigr)^n \, \nu_{n, k}(dH)
\\ \le
\Bigl(\int_{0}^{+\infty}
\Bigl(\int_{G_{n,k}}\bigl[\lambda_k\bigl(A_t(f) \cap H\bigr)\bigr]^n\, \nu_{n, k}(dH)\Bigr)^{1/n} \, dt\Bigr)^n.
\end{multline*}
By Grinberg's
inequality \eqref{grinberg} the last expression does not exceed
\begin{multline*}
\frac{\omega_k^n}{\omega_n^k} \Bigl(\int_{0}^{+\infty} \bigl[\lambda_n\bigl(A_t(f)\bigr)\bigr]^{k/n}
\, dt \Bigr)^n
=
\frac{\omega_k^n}{\omega_n^k} \Bigl(\int_{0}^{\|f\|_{\infty}} \bigl[\lambda_n\bigl(A_t(f)\bigr)\bigr]^{k/n} \, dt \Bigr)^n
\\ \le
\frac{\omega_k^n}{\omega_n^k} \Bigl(\int_{0}^{\|f\|_{\infty}} \lambda_n\bigl(A_t(f)\bigr) \, dt \Bigr)^k
\Bigl(\int_{0}^{\|f\|_{\infty}} 1 \, dt \Bigr)^{n - k} =
\frac{\omega_k^n}{\omega_n^k} \cdot \|f\|_1^k \cdot \|f\|_{\infty}^{n-k}
\end{multline*}
where the second bound follows from Holder's inequality with the parameters
$p = \frac{n}{k}$ and $q = \frac{n}{n - k}$,
and the last equality follows from Fubini's theorem.
The theorem is proved.
\end{proof}

We recall the definition of the isotropic constant of a log-concave function.
\begin{definition}[see Definition $2.3.11$ in \cite{BGVV}]
\label{def1}
Let $f \in Q_0^n$ and let ${\rm Cov}(f)$ be the covariance matrix of the measure
$\frac{1}{\|f\|_1} f(x) \, dx$. The number
$$L_f = \Big( \frac{\|f\|_{\infty}}{\|f\|_1} \Big)^{\frac{1}{n}} \big(\det {\rm Cov}(f)\big)^{\frac{1}{2n}}$$
is called the isotropic constant of the function $f$.
\end{definition}
We note that the isotropic constant $L_f$ does not change after
nondegenerate linear transformations
and after multiplication of the function $f$ by a positive number.

\begin{definition}[see Definition $2.3.9$ in \cite{BGVV}]
We say that the function $f \in Q_0^n$ is in the isotropic position if

$1) \ \|f\|_1 = 1,$

$2) \ {\rm Cov}(f) = Id$ --- identity matrix,

$3) \ \displaystyle \int_{\mathbb{R}^n} x \, f(x) \, dx = 0$ --- the function is centered.
\end{definition}
It is known that, for every function $f \in Q_0^n$,
there is a number $\alpha > 0$ and
a nondegenerate linear transformation $A \in GL(n)$
such that the function $\alpha\cdot \bigl(A\bigl[f(\cdot + a)\bigr]\bigr)$ is in the isotropic position
where $a$ is the mean vector of $f$.

\begin{lemma}
\label{lemma1}
There is an absolute constant $C > 0$ such that,
for every log-concave probability density $f$ in the isotropic position,
for all subspaces $H \in G_{n,k}$, the following inequality holds
$$C^{n-k} \le \|S_H f \|_{L^1(H)} \le L_{n-k}^{n-k}$$
where $L_{d} = \sup  \{L_g\colon g \in Q_0^d\}$.
\end{lemma}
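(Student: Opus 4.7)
The plan is to identify $\|S_H f\|_{L^1(H)}$ with the value at the origin of the $(n-k)$-dimensional marginal of $f$, and then to combine Fradelizi's inequality with the known two-sided control on the isotropic constant.

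First I would observe, by Fubini's theorem, that
$$
\|S_H f\|_{L^1(H)} = \int_H f(y)\, dy = f^{H^\perp}(0),
$$
where $f^{H^\perp}(z) := \int_H f(y+z)\, dy$ is the marginal of $f$ on $H^\perp \cong \mathbb{R}^{n-k}$. The key auxiliary step is then to verify that $f^{H^\perp}$ is itself an isotropic log-concave probability density on $H^\perp$: log-concavity follows from Pr\'ekopa--Leindler, $\|f^{H^\perp}\|_1 = \|f\|_1 = 1$ from Fubini, the centering $\int z\, f^{H^\perp}(z)\, dz = 0$ from projecting the barycenter identity for $f$ onto $H^\perp$, and $\mathrm{Cov}(f^{H^\perp}) = \mathrm{Id}_{H^\perp}$ from the fact that the covariance of a marginal equals the restriction of the full covariance to $H^\perp$. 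Consequently, from Definition~\ref{def1} applied in dimension $n-k$, one obtains the identity $\|f^{H^\perp}\|_\infty = L_{f^{H^\perp}}^{n-k}$.

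The upper bound is then immediate:
$$
\|S_H f\|_{L^1(H)} = f^{H^\perp}(0) \le \|f^{H^\perp}\|_\infty = L_{f^{H^\perp}}^{n-k} \le L_{n-k}^{n-k},
$$
the last inequality by the definition of $L_{n-k}$ as a supremum over $Q_0^{n-k}$. For the lower bound I would invoke the classical Fradelizi inequality: for every centered log-concave function $g$ on $\mathbb{R}^d$ one has $g(0) \ge e^{-d}\|g\|_\infty$. Applied to $g = f^{H^\perp}$ this yields $\|S_H f\|_{L^1(H)} \ge e^{-(n-k)} L_{f^{H^\perp}}^{n-k}$, and a final appeal to the universal lower bound $L_g \ge c_0$, valid for some absolute constant $c_0 > 0$ and all $g \in Q_0^d$, closes the argument with $C := c_0/e$.

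No step is genuinely hard in isolation; the most delicate piece is the verification that the marginal $f^{H^\perp}$ is itself in isotropic position, which is a straightforward exercise in linear algebra with the covariance matrix but is the point I would want to write out carefully before quoting Pr\'ekopa--Leindler, Fradelizi's inequality, and the universal lower bound on the isotropic constant.
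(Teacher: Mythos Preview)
Your proposal is correct and follows essentially the same route as the paper: both identify $\|S_H f\|_{L^1(H)}$ with the value at the origin of the marginal density $\varrho_Y = f^{H^\perp}$ on $H^\perp$, observe that this marginal is again isotropic log-concave so that $\|\varrho_Y\|_\infty = L_{\varrho_Y}^{n-k}$, and then combine Fradelizi's inequality with the universal lower bound on the isotropic constant. Your write-up is if anything slightly more explicit about why the marginal inherits isotropy, which the paper simply asserts.
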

\begin{proof}
Let $X$ be a random vector with the distribution density $f$ and 
let $H$ be a $k$-dimensional subspace of $\mathbb{R}^n$. 
Let $\theta_1, ..., \theta_n$ be an orthonormal basis in $\mathbb{R}^n$ 
such that $\theta_1, ..., \theta_{n-k}$ is a 
basis in $H^{\perp}$ and $\theta_{n-k+1}, ..., \theta_n$ is a basis in $H$.

Let
$$Y =
\begin{pmatrix}
\langle X, \theta_1 \rangle \\
\dots \\
\langle X, \theta_{n-k} \rangle \\
\end{pmatrix},$$
i.e. $Y$ is the projection of the vector $X$ onto the subspace $H^{\perp}$.

We note that the distribution density of $Y$ is of the form
$$\varrho_Y(t_1, ..., t_{n-k}) = \int_H f(y + t_1 \theta_1 + ... + t_{n-k} \theta_{n-k}) \, dy.$$
In particular,
$$\varrho_Y(0) = \int_H f(y) \, dy.$$
Since the density $f$ is in the isotropic position, the density $\varrho_Y$
is also in the isotropic position. Firstly, by Fradelizi
inequality (see Theorem 2.2.2 in \cite{BGVV}) and
by the lower bound of the isotropic constant
(see Proposition 2.3.12 in \cite{BGVV}) we have
$$e \varrho_Y(0)^{\frac{1}{n-k}} \ge \|\varrho_Y\|_{\infty}^{\frac{1}{n-k}}
= L_{\varrho_Y} \ge c>0.$$
Thus,
$$\left(\int_{H} f(y) \, dy\right)^{\frac{1}{n-k}} = \varrho_Y(0)^{\frac{1}{n-k}} \geq \frac{c}{e}.$$
Secondly,
$$\int_H f(y) \, dy = \varrho_Y(0) \le \|\varrho_Y\|_{\infty} = L_{\varrho_Y}^{n-k} \le L_{n-k}^{n-k}.$$
The lemma is proved.
\end{proof}

\begin{lemma}
\label{lemma2}
There is an absolute constant $c_0 > 0$ such that, $\forall k \in \mathbb{N}$, $0 < k < n$,
the following inequality holds
$$\frac{\omega_k^n}{\omega_n^k} \le c_0^{n (n-k)}.$$
\end{lemma}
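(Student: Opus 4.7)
The plan is to reduce the inequality to an explicit bound on a ratio of Gamma functions and then invoke Stirling's formula. Using the identity $\omega_n = \pi^{n/2}/\Gamma(n/2+1)$, the powers of $\pi$ cancel exactly and one obtains the clean expression
$$\frac{\omega_k^n}{\omega_n^k} = \frac{\Gamma(n/2+1)^k}{\Gamma(k/2+1)^n}.$$
After this reduction the problem is purely about Gamma functions, and the desired absolute constant should arise from standard asymptotics.

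Next, I would apply the two-sided Stirling bound $\sqrt{2\pi x}\,(x/e)^x \le \Gamma(x+1) \le \sqrt{2\pi x}\,(x/e)^x e^{1/(12x)}$, using the upper bound in the numerator (at $x=n/2$) and the lower bound in the denominator (at $x=k/2$). Taking logarithms and collecting terms, a direct algebraic simplification yields
$$\log\frac{\omega_k^n}{\omega_n^k} \le \frac{1}{6} + \frac{k(n+1)}{2}\log\frac{n}{k} - \frac{n-k}{2}\log(\pi k).$$
The third term is nonpositive for $k \ge 1$ and can be discarded, leaving a single dominant positive contribution.

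Finally, I would estimate $\log(n/k) = \log(1 + (n-k)/k) \le (n-k)/k$ using $\log(1+x) \le x$, which gives $k\log(n/k) \le n-k$. Combined with $n+1 \le 2n$, this yields $\tfrac{k(n+1)}{2}\log(n/k) \le n(n-k)$, and since $n(n-k)\ge 1$ the additive constant $1/6$ is easily absorbed. Altogether, $\log(\omega_k^n/\omega_n^k) \le (7/6)\,n(n-k)$, giving the lemma with $c_0 = e^{7/6}$.

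The main obstacle is purely bookkeeping in the second step: the algebra after Stirling must be executed so that the only remaining positive term is of the desired form $k(n+1)\log(n/k)/2$, with the correction $-(n-k)\log(\pi k)/2$ having the right sign to be discarded. Once this reduction is in place, the elementary inequality $\log(1+x)\le x$ closes the argument immediately and no delicate estimate is required.
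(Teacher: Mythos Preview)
Your proof is correct. The reduction to the Gamma-function ratio is exact, the Stirling bounds you quote hold for all $x>0$ (in particular for $x=k/2$ with $k=1$), the algebraic simplification to $\tfrac{1}{6}+\tfrac{k(n+1)}{2}\log(n/k)-\tfrac{n-k}{2}\log(\pi k)$ checks out, and the final estimate via $\log(1+x)\le x$ closes the argument cleanly with the explicit constant $c_0=e^{7/6}$.

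Your route is genuinely different from the paper's. The paper rewrites the inequality as $\omega_k^{k+s}/\omega_{k+s}^k\le c_0^{(k+s)s}$ with $s=n-k$ and proceeds by induction on $s$: the base case $s=1$ is handled by checking (via Stirling asymptotics) that $\omega_k/\omega_{k+1}^{k/(k+1)}$ is bounded as $k\to\infty$, and the inductive step is a short algebraic manipulation combining the hypothesis for $s$ with the base case at level $k+s$. Your approach trades the induction for a single global application of the two-sided Stirling bound, which makes the constant explicit and the argument entirely self-contained. The paper's version is slightly more structural (it isolates the codimension-one case as the only analytic input), but it does not produce an explicit $c_0$ and relies on an asymptotic ``$\sim$'' in the base case that still needs a supremum argument to become a uniform bound. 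Either approach is adequate for how the lemma is used downstream; yours is tighter and more transparent.
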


\begin{proof}
We note that the stated bound is equivalent to the following inequality
\begin{equation}
\label{eq1}
\frac{\omega_k^{k + s}}{\omega_{k + s}^k} \le c_0^{(k + s) s}
\end{equation}
where $k, s \in \mathbb{N}$. We prove it by induction in $s$.

1) The base case.
We prove the inequality \eqref{eq1} for every $k \in \mathbb{N}$ and $s = 1$.
\begin{equation*}
\frac{\omega_{k}}{\omega_{k+1}^{\frac{k}{k+1}}} =
\frac{\left( \Gamma\left(\frac{k+1}{2} + 1\right) \right)^{\frac{k}{k+1}}}{\Gamma\left(\frac{k}{2} + 1\right)}
\sim \frac{(\sqrt{\pi k})^{\frac{k}{k+1}} \left( \frac{k+1}{2e} \right)^{\frac{k}{2}}}{\sqrt{\pi k} \left( \frac{k}{2e} \right)^{\frac{k}{2}}}
= \frac{1}{(\sqrt{\pi k})^{\frac{1}{k+1}}} \Big( 1 + \frac{1}{k} \Big)^{\frac{k}{2}}
\sim \sqrt{e}.
\end{equation*}
Therefore, there is an absolute constant $c_0 > 0$ such
that
$$
\frac{\omega_{k}^{k+1}}{\omega_{k+1}^{k}} \le c_0^{k+1}.
$$

2) The inductive step.
Suppose that the inequality \eqref{eq1} holds for every $k \in \mathbb{N}$ and some
fixed $s \in \mathbb{N}$. We now prove the bound \eqref{eq1}
for every $k \in \mathbb{N}$ and $s+1$ in place of $s$. So,
we want to prove the estimate
$$\frac{\omega_k^{k+s+1}}{\omega_{k+s+1}^k}
\le c_0^{(k+s+1) (s+1)}.$$
By the induction hypothesis
$$\omega_k^{k+s+1} = (\omega_k^{k+s})^{\frac{k+s+1}{k+s}} \le (\omega_{k+s}^k c_0^{(k+s) s})^{\frac{k+s+1}{k+s}} = (\omega_{k+s}^{k+s+1})^{\frac{k}{k+s}} c_0^{(k+s+1)s}.$$
Applying the base of induction we get
$$(\omega_{k+s}^{k+s+1})^{\frac{k}{k+s}} c_0^{(k+s+1)s} \le (\omega_{k+s+1}^{k+s} c_0^{k+s+1})^{\frac{k}{k+s}} c_0^{(k+s+1)s} = \omega_{k+s+1}^k c_0^{(k+s+1)(s+\frac{k}{k+s})} \le \omega_{k+s+1}^k c_0^{(k+s+1)(s+1)}.$$
The lemma is proved.
\end{proof}

\begin{remark}
{\rm
We point out, that the exact value of the constant $c_0$ in the inequality above
was obtained in \cite[Lemma 2.1]{KL00}.
However, the simple estimate from Lemma \ref{lemma2} is sufficient for our purposes.
}
\end{remark}

\begin{theorem}[The functional Busemann--Petty problem] \label{T4}
There is an absolute constant $C\! >~\!\!\!0$ such that,
if $f_1, f_2 \in Q_0^n$ satisfy the Busemann--Petty condition
\begin{equation}\label{eq-bp}
\|S_H f_1\|_{L^1(H)} \le \|S_H f_2\|_{L^1(H)}\quad \forall H\in G_{n, k},
\end{equation}
and $f_1^{\frac{n}{k}}$ is centered, then the following inequality holds
$$\|f_1\|_{\frac{n}{k}} \le (C L)^{n-k} \cdot \|f_2\|_1^{\frac{k}{n}} \cdot \|f_2\|_{\infty}^{\frac{n-k}{n}}$$
where $L$ is the isotropic constant of the function $f_1^{\frac{n}{k}}$.
\end{theorem}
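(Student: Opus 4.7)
The plan is to chain together three ingredients: the functional Grinberg inequality (Theorem~\ref{funcgrinberg}) applied to $f_2$, the Busemann--Petty hypothesis applied $H$-by-$H$, and a dimension-free lower bound on $\|S_H f_1\|_{L^1(H)}$ that we will extract from Lemma~\ref{lemma1} after placing $f_1^{n/k}$ in isotropic position. The isotropic constant $L$ enters precisely because we have to trade a power of $f_1^{n/k}$ (which is what the isotropic lemma controls) for a power of $f_1$ itself (which is what appears in the Busemann--Petty condition).

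First I would verify that both the hypothesis \eqref{eq-bp} and the claimed inequality are invariant under the simultaneous action of a nondegenerate linear map $A$ on $f_1$ and $f_2$. For \eqref{eq-bp} this is analogous to the projection-case lemma proved in Section~\ref{sect-Sh-2}: substituting $z=Ty$ with $T=A^{-1}$ gives $\|S_H(Af)\|_{L^1(H)}=|\det(T|_H)|^{-1}\|S_{T(H)}f\|_{L^1(T(H))}$, and the $H$-dependent Jacobian cancels between $f_1$ and $f_2$. For the target inequality the factor $|\det A|^{k/n}$ arises on both sides. Using $(Af_1)^{n/k}=A(f_1^{n/k})$ and the linear invariance of the isotropic constant, together with the centering assumption, I may therefore assume without loss of generality that $g_0:=f_1^{n/k}/\|f_1^{n/k}\|_1$ is a centered log-concave probability density in isotropic position. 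In this position $L^n=\|g_0\|_\infty$, hence $\|f_1\|_\infty^{n/k}=L^n\|f_1\|_{n/k}^{n/k}$, equivalently $\|f_1\|_\infty=L^k\|f_1\|_{n/k}$.

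By Lemma~\ref{lemma1} applied to $g_0$, for every $H\in G_{n,k}$ one has $\|S_H g_0\|_{L^1(H)}\ge c^{n-k}$ for an absolute constant $c>0$, i.e.\ $\int_H f_1(y)^{n/k}\,dy\ge c^{n-k}\|f_1\|_{n/k}^{n/k}$. Since $f_1(y)^{n/k}\le \|f_1\|_\infty^{(n-k)/k}f_1(y)$, this rearranges to
$$
\|S_H f_1\|_{L^1(H)}\;\ge\;\|f_1\|_\infty^{-(n-k)/k}\,c^{n-k}\|f_1\|_{n/k}^{n/k}\;=\;\Bigl(\tfrac{c}{L}\Bigr)^{n-k}\|f_1\|_{n/k},
$$
where the arithmetic $n/k-(n-k)/k=1$ collapses the powers of $\|f_1\|_{n/k}$ cleanly. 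This is the key step.

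Now I raise this pointwise lower bound to the $n$-th power, integrate over $G_{n,k}$, and apply the Busemann--Petty hypothesis followed by functional Grinberg (Theorem~\ref{funcgrinberg}):
$$
\Bigl(\tfrac{c}{L}\Bigr)^{n(n-k)}\|f_1\|_{n/k}^{n}
\le\int_{G_{n,k}}\|S_H f_1\|_{L^1(H)}^{n}\,\nu_{n,k}(dH)
\le\int_{G_{n,k}}\|S_H f_2\|_{L^1(H)}^{n}\,\nu_{n,k}(dH)
\le\tfrac{\omega_k^n}{\omega_n^k}\|f_2\|_1^{k}\|f_2\|_\infty^{n-k}.
$$
Bounding $\omega_k^n/\omega_n^k\le c_0^{\,n(n-k)}$ via Lemma~\ref{lemma2} and taking $n$-th roots yields $\|f_1\|_{n/k}\le (c_0L/c)^{n-k}\|f_2\|_1^{k/n}\|f_2\|_\infty^{(n-k)/n}$, which is the announced bound with $C=c_0/c$. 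The main delicate step is the exchange between $f_1$ and $f_1^{n/k}$: it is exactly here that the hypothesis that $f_1^{n/k}$ (rather than $f_1$) be centered becomes necessary, and exactly here that the isotropic constant $L$ appears with the correct exponent $n-k$.
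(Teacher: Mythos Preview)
Your proof is correct and follows essentially the same route as the paper's own argument: reduce by linear invariance so that $f_1^{n/k}$ (normalized) is isotropic, use Lemma~\ref{lemma1} for the lower bound on $\|S_H f_1^{n/k}\|_{L^1(H)}$, pass from $f_1^{n/k}$ to $f_1$ via the pointwise bound $f_1^{n/k}\le\|f_1\|_\infty^{(n-k)/k}f_1$ (which is where $L$ enters), then apply the Busemann--Petty hypothesis, Theorem~\ref{funcgrinberg}, and Lemma~\ref{lemma2}. The only cosmetic difference is that the paper additionally scales so that $\|f_1\|_{n/k}=1$ from the outset, whereas you carry this factor through the computation; the chain of inequalities is otherwise identical.
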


\begin{proof}
We note that the assumption \eqref{eq-bp} and the assertion of the theorem do not
change under the action of any nondegenerate linear transformation
and after multiplication of both functions $f_1$, $f_2$ by a positive constant, because
$$\|S_H A f\|_{L^1(H)} = \frac{1}{|\det(T)|} \|S_{A^{-1} H} f\|_{L^1(A^{-1} H)}$$
where $T = A^{-1}|_{H}: H \rightarrow A^{-1} H$. Therefore, we may consider only the case when $f_1^{\frac{n}{k}} \in Q_0^n$ is a probability density in the isotropic position. By Lemma \ref{lemma1}, for a function $f\in Q_0^n$ in the isotropic position,
$$C^{n-k} \le \|S_H f\|_{L^1(H)}.$$
Thus,
\begin{multline*}
C^{n-k} \le \Big( \int_{G_{n,k}} \|S_H f_1^{\frac{n}{k}}\|_{L^1(H)}^n \, \nu_{n,k}(dH) \Big)^{\frac{1}{n}}
= \Big( \int_{G_{n,k}} \|S_H (f_1^{\frac{n-k}{k}} \cdot f_1)\|_{L^1(H)}^n \, \nu_{n,k}(dH) \Big)^{\frac{1}{n}} \le \\
\le \|f_1\|_{\infty}^{\frac{n-k}{k}} \Big( \int_{G_{n,k}} \|S_H f_1\|_{L^1(H)}^n \, \nu_{n,k}(dH) \Big)^{\frac{1}{n}}.
\end{multline*}
Since $f_1^{\frac{n}{k}}$ is in the isotropic position,
$$L = L_{f_1^{\frac{n}{k}}} = \|f_1^{\frac{n}{k}}\|_{\infty}^{\frac{1}{n}} = \|f_1\|_{\infty}^{\frac{1}{k}}.$$
Using this equality  and
applying the Busemann--Petty condition~\eqref{eq-bp}, we get
$$\|f_1\|_{\infty}^{\frac{n-k}{k}} \Big( \int_{G_{n,k}} \|S_H f_1\|_{L^1(H)}^n \, \nu_{n,k}(dH) \Big)^{\frac{1}{n}} \le L^{n-k} \Big( \int_{G_{n,k}} \|S_H f_2\|_{L^1(H)}^n \, \nu_{n,k}(dH) \Big)^{\frac{1}{n}}.$$
By functional Grinberg's inequality from Theorem \ref{funcgrinberg}
$$ L^{n-k} \Big( \int_{G_{n,k}} \left( \|S_H f_2\|_{L^1(H)} \, dy \right)^n \, \nu_{n,k}(dH) \Big)^{\frac{1}{n}} \le L^{n-k} \cdot \frac{\omega_k}{\omega_n^{\frac{k}{n}}} \cdot \|f_2\|_1^{\frac{k}{n}} \cdot \|f_2\|_{\infty}^{\frac{n-k}{n}}.$$
Since, by Lemma \ref{lemma2},
$$\frac{\omega_k}{\omega_n^{\frac{k}{n}}} \le c_0^{n-k},$$
we obtain the bound
$$C^{n-k} \le (c_0 L)^{n-k} \cdot \|f_2\|_1^{\frac{k}{n}} \cdot \|f_2\|_{\infty}^{\frac{n-k}{n}}.$$
Finally, $f_1^{\frac{n}{k}}$ is a probability density, therefore, $\|f_1\|_{\frac{n}{k}} = 1$ and
$$\|f_1\|_{\frac{n}{k}} \le \Big(\frac{c_0 L}{C}\Big)^{n-k} \cdot \|f_2\|_1^{\frac{k}{n}} \cdot \|f_2\|_{\infty}^{\frac{n-k}{n}}.$$
The theorem is proved.
\end{proof}

The assumption that the function $f_1^{\frac{n}{k}}$ is centered seems rather odd.
In the following theorem, we present the result when the function $f_1$
is centered, which seems more natural.

\begin{theorem}\label{T5}
There is an absolute constant $C > 0$ such that,
if $f_1, f_2 \in Q_0^n$ satisfy the Busemann--Petty condition \eqref{eq-bp}
and $f_1$ is centered, then the following inequality holds
$$\|f_1\|_1^{\frac{k}{n}} \cdot \|f_1\|_{\infty}^{\frac{n-k}{n}} \le
(C L)^{n-k} \cdot \|f_2\|_1^{\frac{k}{n}} \cdot \|f_2\|_{\infty}^{\frac{n-k}{n}}$$
where $L$ is the isotropic constant of the function $f_1$.
\end{theorem}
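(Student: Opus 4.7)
\textbf{Proof sketch for Theorem \ref{T5}.} The plan is to follow the pattern of Theorem \ref{T4}, performing the isotropic reduction directly on $f_1$ (legitimate since $f_1$ is centered) rather than on $f_1^{n/k}$.

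First I would check that both sides of the desired inequality, the hypothesis \eqref{eq-bp}, and the isotropic constant $L_{f_1}$ are invariant under the simultaneous replacement $(f_1, f_2) \mapsto (\alpha A f_1, \alpha A f_2)$ for a nondegenerate linear $A$ and a scalar $\alpha > 0$. Invariance of \eqref{eq-bp} follows from the change-of-variables identity for sections already established in the proof of Theorem \ref{T4}; invariance of both sides of the target bound follows from $\|\alpha A f\|_1 = \alpha|\det A|\|f\|_1$ and $\|\alpha A f\|_\infty = \alpha\|f\|_\infty$, making both sides scale by the common factor $\alpha|\det A|^{k/n}$. Taking $A = {\rm Cov}(f_1)^{-1/2}$ and $\alpha = (|\det A|\|f_1\|_1)^{-1}$ puts $f_1$ into the isotropic position: centeredness is preserved under linear transformations, the normalization arranges $\|f_1\|_1 = 1$, and the choice of $A$ forces ${\rm Cov}(f_1) = Id$. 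In this position $L = L_{f_1} = \|f_1\|_\infty^{1/n}$, so the left-hand side of the target inequality collapses to $L^{n-k}$.

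Next I would apply Lemma \ref{lemma1} directly to $f_1$, which is now a log-concave probability density in the isotropic position, producing an absolute constant $C_1 > 0$ with
$$C_1^{n-k} \le \|S_H f_1\|_{L^1(H)} \quad \forall H \in G_{n,k}.$$
Raising to the $n$-th power, integrating against $\nu_{n,k}$, using the Busemann--Petty hypothesis \eqref{eq-bp} on the right, and finally invoking the functional Grinberg inequality (Theorem \ref{funcgrinberg}) gives
$$C_1^{n(n-k)} \le \int_{G_{n,k}} \|S_H f_2\|_{L^1(H)}^n\,\nu_{n,k}(dH) \le \frac{\omega_k^n}{\omega_n^k}\|f_2\|_1^k\|f_2\|_\infty^{n-k}.$$
Lemma \ref{lemma2} bounds the prefactor by $c_0^{n(n-k)}$. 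Taking the $n$-th root and multiplying both sides by $L^{n-k}$ gives
$$L^{n-k} \le (c_0/C_1)^{n-k}\,L^{n-k}\,\|f_2\|_1^{k/n}\|f_2\|_\infty^{(n-k)/n} = (CL)^{n-k}\|f_2\|_1^{k/n}\|f_2\|_\infty^{(n-k)/n}$$
with $C := c_0/C_1$, which is the claimed bound.

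The only conceptually new step is the isotropic reduction in the first paragraph: Theorem \ref{T4} needed $f_1^{n/k}$ centered because the isotropic normalization was applied at that level, whereas here the natural normalization is applied to $f_1$ directly. After this change of viewpoint, the remainder of the argument is a transplant from the proof of Theorem \ref{T4}, and I do not anticipate any serious obstacle.
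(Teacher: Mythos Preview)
Your proposal is correct and follows essentially the same route as the paper's proof: reduce to $f_1$ in the isotropic position by invariance, apply Lemma~\ref{lemma1} to bound $\|S_H f_1\|_{L^1(H)}$ from below, pass to $f_2$ via the Busemann--Petty condition, invoke the functional Grinberg inequality and Lemma~\ref{lemma2}, and finally multiply through by $L^{n-k}=\|f_1\|_\infty^{(n-k)/n}$ to recover the left-hand side. The paper's presentation is slightly terser (it does not spell out the explicit choice of $A$ and $\alpha$), but the argument is the same.
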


\begin{proof}
We note that, like in the previous theorem, the assumption \eqref{eq-bp}
and the assertion of the theorem do not change under the
action of any nondegenerate linear mapping and after multiplication of
both functions $f_1$, $f_2$ by a positive constant.
Therefore, we may consider only the case when $f_1 \in Q_0^n$
is a probability density in the isotropic position. By Lemma \ref{lemma1},
for $f$ in the isotropic position,
$$C^{n-k} \le \|S_H f\|_{L^1(H)}.$$
Thus, applying the Busemann--Petty condition \eqref{eq-bp} and functional Grinberg's inequality from Theorem \ref{funcgrinberg} we get
\begin{multline*}
C^{n-k} \le \Big( \int_{G_{n,k}} \|S_H f_1\|_{L^1(H)}^n \, \nu_{n,k}(dH) \Big)^{\frac{1}{n}}
\le
\Big( \int_{G_{n,k}} \|S_H f_2\|_{L^1(H)}^n \, \nu_{n,k}(dH) \Big)^{\frac{1}{n}}
\le \\ \le \frac{\omega_k}{\omega_n^{\frac{k}{n}}} \cdot
\|f_2\|_1^{\frac{k}{n}} \cdot \|f_2\|_{\infty}^{\frac{n-k}{n}}\le
c_0^{n-k}\cdot \|f_2\|_1^{\frac{k}{n}} \cdot \|f_2\|_{\infty}^{\frac{n-k}{n}}
\end{multline*}
where we have applied Lemma \ref{lemma2} in the last inequality. Thus,
$$C^{n-k} \le c_0^{n-k} \|f_2\|_1^{\frac{k}{n}} \cdot \|f_2\|_{\infty}^{\frac{n-k}{n}}.$$
Since $f_1$ is in the isotropic position,
$$L = L_{f_1} = \|f_1\|_{\infty}^{\frac{1}{n}}.$$
Multiplying the last inequality by $L^{n-k}$ we get
$$C^{n-k} \|f_1\|_{\infty}^{\frac{n-k}{n}} \le (c_0 L)^{n-k} \|f_2\|_1^{\frac{k}{n}} \cdot \|f_2\|_{\infty}^{\frac{n-k}{n}}.$$
Finally, $f_1$ is a probability density, therefore, $\|f_1\|_1 = 1$ and
$$\|f_1\|_1^{\frac{k}{n}} \cdot \|f_1\|_{\infty}^{\frac{n-k}{n}} \le \Bigl( \frac{c_0 L}{C} \Bigr)^{n-k} \|f_2\|_1^{\frac{k}{n}} \cdot \|f_2\|_{\infty}^{\frac{n-k}{n}}.$$
The theorem is proved.
\end{proof}

\begin{remark}
{\rm
As it has been recently established by Klartag and Lehec in \cite{KL},
one always has $L_d\le C(1+\log d)^4$ for some absolute constant $C>0$.
Bourgain's slicing conjecture asserts that one always has $L_d\le C$ for some universal konstant
$C>0$. This conjecture is still an open problem.
}
\end{remark}

\section{Functional Milman problem}
\label{sect-M}

\begin{theorem}[The functional Milman problem]\label{T3}
Let $f_1, f_2 \in Q_0^n$ and assume that the Milman condition
\begin{equation}\label{eq-milm}
\|P_H f_1\|_{L^1(H)} \le \|S_H f_2\|_{L^1(H)}\quad \forall H\in G_{n, k}
\end{equation}
is satisfied.
Then
$$\|f_1\|_{\frac{n}{k}} \leq \|f_2\|_1^{\frac{k}{n}} \cdot \|f_2\|_{\infty}^{\frac{n-k}{n}}.$$
\end{theorem}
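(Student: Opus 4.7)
The plan is to combine three tools already developed in the paper: the functional Alexandrov inequality, the Milman hypothesis itself, and the functional Grinberg inequality (together with Jensen's inequality to raise it to a single power of the $\nu_{n,k}$-integral). The miraculous feature of the Milman setting is that the constants $\omega_k$ and $\omega_n^{k/n}$ from Alexandrov and Grinberg cancel exactly, which is why the final bound carries no multiplicative constant at all.

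\medskip

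Concretely, I would first invoke Theorem \ref{alexandrov} in its second form to obtain
$$
\|f_1\|_{n/k}\le \frac{\omega_n^{k/n}}{\omega_k}\int_{G_{n,k}}\|P_H f_1\|_{L^1(H)}\,\nu_{n,k}(dH).
$$
Then, applying the Milman hypothesis \eqref{eq-milm} pointwise in $H$, the right-hand side is dominated by
$$
\frac{\omega_n^{k/n}}{\omega_k}\int_{G_{n,k}}\|S_H f_2\|_{L^1(H)}\,\nu_{n,k}(dH).
$$

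\medskip

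Next I would pass from this $L^1(\nu_{n,k})$-average to an $L^n(\nu_{n,k})$-average by Jensen's inequality, since $\nu_{n,k}$ is a probability measure:
$$
\int_{G_{n,k}}\|S_H f_2\|_{L^1(H)}\,\nu_{n,k}(dH)\le \Bigl(\int_{G_{n,k}}\|S_H f_2\|_{L^1(H)}^n\,\nu_{n,k}(dH)\Bigr)^{1/n}.
$$
Now Theorem \ref{funcgrinberg} applies directly and bounds this $L^n$-average by
$$
\frac{\omega_k}{\omega_n^{k/n}}\,\|f_2\|_1^{k/n}\,\|f_2\|_\infty^{(n-k)/n}.
$$

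\medskip

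Chaining the three displays, the factor $\omega_n^{k/n}/\omega_k$ produced by Alexandrov is cancelled precisely by the factor $\omega_k/\omega_n^{k/n}$ produced by Grinberg, leaving
$$
\|f_1\|_{n/k}\le \|f_2\|_1^{k/n}\,\|f_2\|_\infty^{(n-k)/n},
$$
which is the announced estimate. There is essentially no obstacle here: every ingredient has already been proved in the paper and the only substantive non-automatic step is the Jensen reduction from $L^1$ to $L^n$ over $G_{n,k}$, which works cleanly because $\nu_{n,k}$ is a probability measure. The truly delicate part of the Milman-type bound has been absorbed into the proof of Theorem \ref{funcgrinberg} (the functional Grinberg inequality), and the combinatorial matching of the $\omega$-constants is the reason the Milman problem admits a constant-free functional inequality, in sharp contrast with the Shephard and Busemann--Petty versions treated earlier.
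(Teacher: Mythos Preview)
Your proof is correct and follows essentially the same approach as the paper: apply the functional Alexandrov inequality, use the Milman hypothesis, pass from the $L^1(\nu_{n,k})$-average to the $L^n(\nu_{n,k})$-average by Jensen, and then invoke the functional Grinberg inequality so that the $\omega$-constants cancel exactly. The paper's proof is identical in structure and in the ingredients used.
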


\begin{proof}
Applying the condition \eqref{eq-milm} and
functional Alexandrov's inequality from Theorem \ref{alexandrov} we get
\begin{multline*}
\|f_1\|_{\frac{n}{k}} \le \frac{\omega_n^{\frac{k}{n}}}{\omega_k} \int_{G_{n,k}} \|P_H f_1\|_{L^1(H)}\, \nu_{n, k}(dH)
\le\\\le
\frac{\omega_n^{\frac{k}{n}}}{\omega_k} \int_{G_{n,k}} \|S_H f_2\|_{L^1(H)}\, \nu_{n, k}(dH)
\le
\frac{\omega_n^{\frac{k}{n}}}{\omega_k} \Bigl(\int_{G_{n,k}}
\|S_H f_2\|_{L^1(H)}^n \, \nu_{n, k}(dH) \Bigr)^{\frac{1}{n}}.
\end{multline*}
By functional Grinberg's inequality from Theorem \ref{funcgrinberg},
the last expression does not exceed
$$ \frac{\omega_n^{\frac{k}{n}}}{\omega_k}
\Bigl(\frac{\omega_k^n}{\omega_n^k} \cdot \|f_2\|_1^{k}\cdot \|f_2\|_{\infty}^{n-k} \Bigr)^{\frac{1}{n}}.$$
Thus,
$$ \|f_1\|_{\frac{n}{k}} \le \|f_2\|_1^{\frac{k}{n}}\cdot \|f_2\|_{\infty}^{\frac{n-k}{n}},$$
and the theorem is proved.
\end{proof}

We now prove a functional analog of Theorem $1.3$ from \cite{GK}
which provides a bound in the Milman problem in terms
of the isotropic constant of one of the functions.

\begin{theorem}\label{T6}
Let $f_1, f_2 \in Q_0^n$ satisfy the
Milman condition \eqref{eq-milm} and let $f_2$ be
centered. Then the following inequality holds
$$
\|f_1\|_{\frac{n}{k}} \le \Bigl( \frac{L_{n-k}}{L_{f_2}} \Bigr)^{n-k} \|f_2\|_1^{\frac{k}{n}} \cdot \|f_2\|_{\infty}^{\frac{n-k}{n}}
$$
where $L_{d} = \sup\{L_g\colon g \in Q_0^d\}$.
\end{theorem}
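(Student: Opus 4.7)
The plan is to adapt the argument of Theorem~\ref{T3}, replacing the crude functional Grinberg bound on the Grassmannian integral by a sharper estimate that incorporates the isotropic constant of $f_2$. First I would normalize: since the Milman condition \eqref{eq-milm} and the sought conclusion are both invariant under the simultaneous rescaling $(f_1, f_2)\mapsto (cf_1, cf_2)$, one may assume $\|f_2\|_1=1$, so that $f_2$ is a centered log-concave probability density on $\mathbb{R}^n$; let $\Sigma:=\mathrm{Cov}(f_2)$.

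Starting from Alexandrov's inequality (Theorem~\ref{alexandrov}) and the Milman condition,
$$\|f_1\|_{n/k} \le \frac{\omega_n^{k/n}}{\omega_k}\int_{G_{n,k}}\|P_H f_1\|_{L^1(H)}\, d\nu_{n,k}(H)\le \frac{\omega_n^{k/n}}{\omega_k}\int_{G_{n,k}}\|S_H f_2\|_{L^1(H)}\, d\nu_{n,k}(H).$$
For each $H\in G_{n,k}$, the marginal $\varrho_H(y):=\int_H f_2(y+z)\, dz$ for $y\in H^\perp$ is a centered log-concave probability density on $H^\perp$ with covariance $\Sigma|_{H^\perp}$. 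Following the proof of Lemma~\ref{lemma1}, the identity $\|\varrho_H\|_\infty=L_{\varrho_H}^{n-k}(\det(\Sigma|_{H^\perp}))^{-1/2}$ coming directly from the definition of the isotropic constant, together with $L_{\varrho_H}\le L_{n-k}$, yields the pointwise bound
$$\|S_H f_2\|_{L^1(H)}=\varrho_H(0)\le \|\varrho_H\|_\infty \le L_{n-k}^{n-k}\bigl(\det(\Sigma|_{H^\perp})\bigr)^{-1/2}.$$

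The key technical step is the Grassmannian estimate $\int_{G_{n,k}}(\det(\Sigma|_{H^\perp}))^{-1/2}\, d\nu_{n,k}(H)\le (\det\Sigma)^{-(n-k)/(2n)}$. I would obtain this by applying classical Grinberg's inequality \eqref{grinberg} to the ellipsoid $E_\Sigma:=\{x:\langle x,\Sigma^{-1}x\rangle\le 1\}$, whose volume is $\omega_n\sqrt{\det\Sigma}$ and whose $k$-dimensional section $E_\Sigma\cap H$ is an ellipsoid of volume $\omega_k(\det(\Sigma^{-1}|_H))^{-1/2}$. Grinberg then gives $\int_{G_{n,k}}(\det(\Sigma^{-1}|_H))^{-n/2}\, d\nu_{n,k}(H)\le (\det\Sigma)^{k/2}$. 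Replacing $\Sigma$ by $\Sigma^{-1}$ and passing from $H\in G_{n,k}$ to $H^\perp\in G_{n,n-k}$ via the Haar-preserving involution yields $\int_{G_{n,k}}(\det(\Sigma|_{H^\perp}))^{-n/2}\, d\nu_{n,k}(H)\le (\det\Sigma)^{-(n-k)/2}$, and H\"older's inequality on the probability space $(G_{n,k},\nu_{n,k})$ converts this to the desired $L^1$-estimate.

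Combining all of the above gives $\|f_1\|_{n/k}\le \frac{\omega_n^{k/n}}{\omega_k}\,L_{n-k}^{n-k}\,(\det\Sigma)^{-(n-k)/(2n)}$. The prefactor $\omega_n^{k/n}/\omega_k$ is at most $1$ because the log-convexity of $\Gamma$ at the interpolation point $k/2+1=(k/n)(n/2+1)+(1-k/n)\cdot 1$ gives $\Gamma(k/2+1)\le \Gamma(n/2+1)^{k/n}$. Finally, in the normalized setting $L_{f_2}^n=\|f_2\|_\infty(\det\Sigma)^{1/2}$, whence $(\det\Sigma)^{-(n-k)/(2n)}=L_{f_2}^{-(n-k)}\|f_2\|_\infty^{(n-k)/n}$, and together with $\|f_2\|_1^{k/n}=1$ this recovers the announced inequality. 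The main obstacle is the Grassmannian estimate for the inverse square root of $\det(\Sigma|_{H^\perp})$; the crux of the argument is the recognition that applying classical Grinberg to an ellipsoid produces exactly the right bound.
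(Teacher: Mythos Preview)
Your argument is correct and reaches the same final inequality, but the route differs from the paper's in one structural respect. The paper normalizes $f_2$ to isotropic position via some $A\in SL(n)$, applies the upper bound of Lemma~\ref{lemma1} to the isotropic density $g$ to get $\|S_H g\|_{L^1(H)}\le L_{n-k}^{n-k}$ for every $H$, and then transfers this back to $f_2$ by invoking the $SL(n)$-invariance of the functional dual affine quermassintegral $\widetilde\Phi_{n-k}(f)=\frac{\omega_n}{\omega_k}\bigl(\int_{G_{n,k}}\|S_H f\|_{L^1(H)}^n\,d\nu_{n,k}\bigr)^{1/n}$, a fact that must be checked separately (the paper refers to the argument in \cite{Grinb}). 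You instead keep the covariance $\Sigma$ general, obtain the pointwise bound $\|S_H f_2\|_{L^1(H)}\le L_{n-k}^{n-k}(\det(\Sigma|_{H^\perp}))^{-1/2}$ directly from the definition of $L_{\varrho_H}$, and control the remaining Grassmannian integral of the determinant factor by applying classical Grinberg~\eqref{grinberg} to the ellipsoid $E_\Sigma$ (where in fact it is an equality), followed by H\"older. Your version has the advantage of avoiding any functional $SL(n)$-invariance statement: only the classical Grinberg inequality for convex bodies is needed, and only in its equality case for ellipsoids. The remaining steps---Alexandrov's inequality, the Milman condition, the bound $\omega_n^{k/n}\le\omega_k$ via log-convexity of $\Gamma$, and the identification $(\det\Sigma)^{-(n-k)/(2n)}=L_{f_2}^{-(n-k)}\|f_2\|_\infty^{(n-k)/n}$---coincide with the paper's.
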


\begin{proof}
Without loss of generality we can assume that $\|f_2\|_1 = 1$
(i.e. $f_2$ is probability destiny).
Let $A \in SL(n)$ be a linear transformation
such that ${\rm Cov} A f_2$ is a scalar matrix with a number $a$ on the diagonal.
According to the definition \ref{def1} of the isotropic constant, we have
$$L = L_{f_2} = L_{Af_2} = \Big( \frac{\|Af_2\|_{\infty}}{\|Af_2\|_1} \Big)^{\frac{1}{n}} \big(\det {\rm Cov}(Af_2)\big)^{\frac{1}{2n}} = \|f_2\|_{\infty}^{\frac{1}{n}} \sqrt{a}.$$
Let $g(x) = a^{\frac{n}{2}}\cdot A[f_2](\sqrt{a} x)$. Since the function $g$ is in the isotropic position, Lemma \ref{lemma1} implies
$$\|S_H g\|_{L^1(H)} \le L_{n-k}^{n-k}.$$
Therefore,
$$\Bigl(\int_{G_{n,k}}
\|S_H g\|_{L^1(H)}^n \, \nu_{n, k}(dH) \Bigr)^{\frac{1}{n}} \le L_{n-k}^{n-k}.$$
From the definition of the function $g$, we get the equality
$$\Bigl(\int_{G_{n,k}}
\|S_H g\|_{L^1(H)}^n \, \nu_{n, k}(dH) \Bigr)^{\frac{1}{n}} = a^{\frac{n-k}{2}} \Bigl(\int_{G_{n,k}}
\|S_H Af_2\|_{L^1(H)}^n \, \nu_{n, k}(dH) \Bigr)^{\frac{1}{n}}$$
We note that the quantity
$$\widetilde{\Phi}_{n-k} (f) = \frac{\omega_n}{\omega_k} \Bigl(\int_{G_{n,k}}
\|S_H f\|_{L^1(H)}^n \, \nu_{n, k}(dH) \Bigr)^{\frac{1}{n}}$$
is invariant under the transformations from $SL(n)$, i.e.
$\widetilde{\Phi}_{n-k}(f) = \widetilde{\Phi}_{n-k} (Af)$ $\forall A \in SL(n)$.
The proof of this fact almost verbatim repeats the arguments from the proof
of Theorem $1$ in \cite{Grinb}.
Thus,
$$\Bigl(\int_{G_{n,k}}
\|S_H f_2\|_{L^1(H)}^n \, \nu_{n, k}(dH) \Bigr)^{\frac{1}{n}} \le \Bigl( \frac{L_{n-k}}{\sqrt{a}} \Bigr)^{n-k} = \Bigl( \frac{L_{n-k}}{L} \Bigr)^{n-k} \|f_2\|_{\infty}^{\frac{n-k}{n}}.$$
Applying the condition \eqref{eq-milm}, functional Alexandrov's inequality from Theorem \ref{alexandrov},
and the last bound,
we get
\begin{multline*}
\|f_1\|_{\frac{n}{k}} \le \frac{\omega_n^{\frac{k}{n}}}{\omega_k} \int_{G_{n,k}} \|P_H f_1\|_{L^1(H)}\, \nu_{n, k}(dH)
\le
\frac{\omega_n^{\frac{k}{n}}}{\omega_k} \int_{G_{n,k}} \|S_H f_2\|_{L^1(H)}\, \nu_{n, k}(dH)
\le \\ \le
\frac{\omega_n^{\frac{k}{n}}}{\omega_k} \Bigl(\int_{G_{n,k}}
\|S_H f_2\|_{L^1(H)}^n \, \nu_{n, k}(dH) \Bigr)^{\frac{1}{n}} \le \frac{\omega_n^{\frac{k}{n}}}{\omega_k} \Bigl( \frac{L_{n-k}}{L} \Bigr)^{n-k} \|f_2\|_{\infty}^{\frac{n-k}{n}} \le \Bigl( \frac{L_{n-k}}{L} \Bigr)^{n-k} \|f_2\|_{\infty}^{\frac{n-k}{n}}.
\end{multline*}
The last estimate above follows from the inequality $\omega_n^k \le \omega_k^n$
which is equivalent to the bound
$$
\Gamma(\tfrac{k}{2} + 1)^n \le \Gamma(\tfrac{n}{2} + 1)^k.
$$
By the log-convexity of the gamma-function we actually have
$$\Gamma(\tfrac{k}{2} + 1) \le \Gamma(\tfrac{n}{2} + 1)^{\frac{k}{n}} \cdot \Gamma(1)^{\frac{n-k}{n}} = \Gamma(\tfrac{n}{2} + 1)^{\frac{k}{n}}.$$
The theorem is proved.
\end{proof}

\begin{remark}
{\rm
We point out that under the assumptions of the previous theorem, the following inequality has actually been obtained
$$\|f_1\|_{\frac{n}{k}} \le  \frac{\omega_n^{\frac{k}{n}}}{\omega_k} \Bigl( \frac{L_{n-k}}{L_{f_2}} \Bigr)^{n-k} \|f_2\|_1^{\frac{k}{n}} \cdot \|f_2\|_{\infty}^{\frac{n-k}{n}}.$$
}
\end{remark}




\end{document}